\documentclass[a4paper,11pt,reqno]{amsart}

\usepackage[T1]{fontenc}
\usepackage[utf8]{inputenc}

\usepackage{amsmath,amsfonts,amsthm,amssymb}
\usepackage[showonlyrefs]{mathtools}
\usepackage{mathrsfs}
\DeclarePairedDelimiter\norm{\lVert}{\rVert}

\usepackage[english]{babel}

\makeatletter
\@namedef{subjclassname@2020}{%
  \textup{2020} Mathematics Subject Classification}
\makeatother

\allowdisplaybreaks[2]

\theoremstyle{thmstyleone}%
\newtheorem{theorem}{Theorem}[section]

\newtheorem{lemma}[theorem]{Lemma}

\theoremstyle{thmstylethree}%
\theoremstyle{definition}

\theoremstyle{thmstyltwo}%

\numberwithin{equation}{section}

\begin{document}

\title[Jordan isomorphisms of group algebras]{Isometric Jordan isomorphisms of group algebras}

\author{J. Alaminos}
\author{J. Extremera}
\author{C. Godoy}
\author{A.\,R. Villena}

\address{J. Alaminos, J. Extremera and A.\,R. Villena, Departamento de An\' alisis
Matem\' atico, Fa\-cul\-tad de Ciencias, Universidad de Granada,
 Granada, Spain} 
\email{alaminos@ugr.es, 
 jlizana@ugr.es,
 avillena@ugr.es}

\address{C. Godoy, Departamento de Matem\'aticas, Universidad de Alicante, Crta. San Vicente del Raspeig s/n, San Vicente del Raspeig, 
	Alicante, Spain} 
\email{c.godoy@ua.es}

\begin{abstract}
	Let $G$ and $H$ be locally compact groups.
	We  will show that each contractive Jordan isomorphism $\Phi\colon L^1(G)\to L^1(H)$ is either an isometric isomorphism or an isometric anti-isomorphism. 
	We will apply this result to study isometric two-sided zero product preservers on group algebras and, further, to study local and approximately local isometric automorphisms of group algebras.
\end{abstract}

\thanks{The authors were supported by 
MCIN/AEI/10.13039/501100011033 and ``ERDF A way of making Europe'' grant PID2021-122126NB-C31
and by Junta de Andaluc\'{\i}a grant FQM185.}

\keywords{
	Locally compact group, group algebra,
	isometric isomorphism, isometric Jordan isomorphism,
	local isometric automorphism, approximately local isometric automorphism%
}

\subjclass[2020]{43A20}

\maketitle

\section{Introduction}

Jordan homomorphisms appear in a wide variety of seemingly dis\-pa\-rate settings.
Numerous linear preserver problems lead to Jordan homomorphisms:
invertibility preservers (\cite{Au,Au2}),
two-sided zero product preservers (\cite{ABEV, AEV, BGV}),
commutativity preservers, normality preservers (\cite[Chapter~7]{BCM}),
preservers on quantum structures (\cite{Mo}),
to mention a few of them.
The surjective isometries between $C^*$-algebras are associated to Jordan $\ast$-isomorphisms~\cite{Kadison}
and the surjective isometries between noncommutative $L^p$ spaces correspond to Jordan $\ast$-isomorphisms between the
underlying von Neumann algebras~\cite{Sh}.
Local homomorphisms also lead to Jordan homomorphisms~\cite{ABEGV}.

Let $\mathcal{A}$ and $\mathcal{B}$ be complex algebras.
A linear map $\Phi\colon\mathcal{A}\to\mathcal{B}$ is called a \emph{Jordan homomorphism} if
\[
	\Phi(a^2)=\Phi(a)^2\quad\forall a\in\mathcal{A};
\]
equivalently,
\[
	\Phi(a\circ b)=\Phi(a)\circ\Phi(b)\quad\forall a,b\in\mathcal{A},
\]
where, from now on, $\circ$ stands for the so-called Jordan product. The Jordan product is defined on
any complex algebra $\mathcal{A}$ by
\[
	a\circ b=\tfrac{1}{2}(ab+ba)\quad\forall a,b\in\mathcal{A}.
\]
The meaning of concepts like Jordan isomorphism and Jordan automorphism
(or Jordan $\ast$-isomorphism and Jordan $\ast$-automorphism,
in the case where $\mathcal{A}$ and $\mathcal{B}$ are equipped with an involution $\ast$)
are supposed to be clear.
Homomorphisms and anti-homomorphisms are obvious examples of Jordan homomorphisms,
and the basic problem is whether every Jordan homomorphism can be expressed through these standard examples.
A breakthrough in this problem was obtained by Herstein in~\cite{H} by showing that
every Jordan homomorphism from an arbitrary ring onto a ($2$, $3$-torsion free) prime ring is either a homomorphism or an anti-homomorphism.
It is probably worth mentioning that group algebras can be far from being prime.
For example, if $G$ is a locally compact group such that $\dim\mathcal{Z}(L^1(G))>1$, then $L^1(G)$ is not prime.
Indeed, by~\cite{LM}, 
$\mathcal{Z}(L^1(G))$ is a Tauberian, regular, semisimple, commutative Banach algebra and hence there exist
non-zero $f,g\in\mathcal{Z}(L^1(G))$ such that $f\ast g=0$, which gives $f\ast L^1(G)\ast g=\{0\}$ and so $L^1(G)$ is not prime.
Our benchmark when considering Jordan homomorphisms in the context of group algebras has been the celebrated theorem by Kadison~\cite[Theorem~10]{Kadison} stating that each Jordan $\ast$-isomorphism from a von Neumann algebra
onto a $C^*$-algebra is the direct sum of a $\ast$-isomorphism and a $\ast$-anti-isomorphism.
It should be pointed out that, on account of~\cite[Theorems~5 and~7]{Kadison},
the Jordan $\ast$-isomorphisms occurring in Kadison's theorem are exactly the isometric Jordan isomorphisms,
while the $\ast$-isomorphisms and $\ast$-anti-isomorphisms are nothing but the isometric isomorphisms and the isometric anti-isomorphisms, respectively. We can thus rephrase Kadison's theorem as:
each  isometric Jordan isomorphism from a von Neumann algebra
onto a $C^*$-algebra is the direct sum of an isometric isomorphism and an isometric anti-isomorphism.

In Section 2 we put the isometric Jordan isomorphisms between group algebras in the center of our attention
and discuss the problem of describing their form.
We are heavily motivated by Kadison's representation of the isometric Jordan isomorphisms of operator algebras
and by Wendel's representation of the isometric isomorphisms of group algebras~\cite{W1, W2}.
Naturally, our seminal aim was to obtain a representation for them
similar to the one given by Kadison in the context of operator algebras.
However, we got more than expected.
We will show that, if $G$ and $H$ are locally compact groups, then
each contractive Jordan isomorphism $\Phi\colon L^1(G)\to L^1(H)$ is
either an isometric isomorphism or an isometric anti-isomorphism.
Surprisingly there are no combinations of isomorphisms and anti-isomorphisms at all in the context of group algebras (at an isometric level).
The secret hidden behind this fortunate phenomenon lies in two crucial facts.
The first one is that group algebras faithfully reflect the personality of the underlying groups. 
The second fact now occurs at the level of group theory;
on account of~\cite{Sc}, the only maps that ``half-preserve'' the product of the groups
are exactly the homomorphisms and the anti-homomorphisms.
Next sections are devoted to apply our result about isometric Jordan isomorphisms to
study various classes of transformations on group algebras.

In~\cite{ABEV, AEV, BGV} the authors undertook the problem of characterizing Jordan homomorphisms through zero products.
Motivated by those papers,
Section~3 deals with the problem of determining the form of any surjective isometric map $\Phi\colon L^1(G)\to L^1(H)$
with the property of preserving two-sided zero products, i.e., for all $f,g\in L^1(G)$
\[
	f\ast g=g\ast f=0 \ \implies \ \Phi f\ast\Phi g=\Phi g\ast\Phi f=0.
\]
We will show that such a map is expressible as
$\Phi f=\alpha \delta_x\ast\Psi f$ for each $f\in L^1(G)$, where
$\alpha\in\mathbb{C}$ with $\vert\alpha\vert=1$,
$x$ is an element in the centre of $H$, and
$\Psi\colon L^1(G)\to L^1(H)$  is either an isometric isomorphism or an isometric anti-isomorphism. Further,
$G$ and $H$ are isomorphic as topological groups.

Section 4 is devoted to the study of local and approximately local isometric automorphisms of group algebras.
In~\cite{MoZ}, Moln\'ar and Zalar show that, with some exceptions, every local isometric automorphism of the group algebra $L^p(G)$, with $1\le p\le\infty$,
of a metrizable compact group $G$ is an isometric automorphism.
We focus attention on the case $p=1$, and 
Section~4 is motivated by the desire to study to what extent
the foregoing can be generalized to other varieties of groups.
We will show that, at the cost of requiring the surjectivity of the map,
the metrizability can be dropped and the compactness can be heavily weakened.
Specifically,
if $G$ is a unimodular locally compact group, then each surjective local isometric automorphism of $L^1(G)$
(i.e., the map agrees with some isometric automorphism at each function in the algebra) is an isometric automorphism.
Further, 
if $G$ is a maximally almost periodic group or a discrete group, then each surjective approximately local isometric automorphism
of $L^1(G)$ (the definition should be self-explanatory)
is an isometric automorphism.

\section{Isometric Jordan isomorphisms}

Let $G$ be a locally compact group.
Throughout this paper, it is always assumed that a left Haar measure $\lambda_G$ on $G$ has been chosen.
As is customary,
we write $L^1(G)$ for the group algebra of $G$, i.e.,
the usual $L^1$-Banach space with respect to $\lambda_G$ equipped with the convolution product, and
we write  $M(G)$ for the measure algebra on $G$, i.e.,
the space of complex regular Borel measures on $G$ equipped with the total variation norm and the convolution product.
Of course, $L^1(G)$ can be thought of as the two-sided closed ideal of $M(G)$ consisting of measures that are absolutely continuous with respect to $\lambda_G$.
The space $M(G)$ is identified with the dual space of $C_0(G)$ with the duality
specified by setting
\[
	\langle \mu,\phi\rangle  =  \int_H\phi(t) \, d\mu(t) \quad\forall\phi\in C_0(G), \ \forall \mu\in M(G).
\]
By~\cite[Theorem 3.3.15]{D}, the Banach space $C_0(G)$ is a Banach
$M(G)$-bimodule (we denote its module product by $\cdot$) and $M(G)$ with respect
to convolution product is the dual of $C_0(G)$ as a Banach $M(G)$-bimodule.
The \emph{strict topology} on $M(G)$ is the topology defined by the family seminorms $(p_f)_{f\in L^1(G)}$ given by
\[
	p_f(\mu) = \Vert f\ast\mu\Vert_1+\Vert \mu\ast f\Vert_1
	\quad\forall \mu\in M(G), \ \forall f\in L^1(G).
\]
Clearly, the convolution on $M(G)$ is separately continuous in the strict topology. Furthermore, $L^1(G)$  is dense in $M(G)$ in the strict topology.

For an element $t\in G$ we denote by $\delta_t$ the unit point mass measure at $t$. It should be pointed out that
\[
	\delta_s\ast\delta_t=\delta_{st}, \quad
	\delta_s\circ\delta_t=\tfrac{1}{2}\left(\delta_{st}+\delta_{ts}\right)
	\quad\forall s,t\in G.
\]
By~\cite[Proposition~3.3.41]{D}, 
the linear span of the set $\left\{\delta_t : t\in G\right\}$ is dense in $M(G)$ with respect to the strict topology.

\begin{lemma}
	Let $G$ be a locally compact group.
	Then the strict topology on $M(G)$ agrees with the topology generated
	by the family of seminorms $(q_f)_{f\in L^1(G)}$ given by
	\[
		q_f(\mu)=\left\Vert f\circ\mu\right\Vert_1
		\quad\forall \mu\in M(G), \ \forall f\in L^1(G).
	\]
\end{lemma}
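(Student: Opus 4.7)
The plan is to prove that the two topologies coincide by comparing the two families of seminorms pointwise. One inclusion is immediate: for every $f\in L^1(G)$ and $\mu\in M(G)$ the triangle inequality gives
\[
q_f(\mu) = \tfrac12\|f\ast\mu+\mu\ast f\|_1 \le \tfrac12\bigl(\|f\ast\mu\|_1+\|\mu\ast f\|_1\bigr) = \tfrac12 p_f(\mu),
\]
so the topology generated by the $(q_f)$ is coarser than the strict topology.

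For the reverse inclusion, I fix $f\in L^1(G)$ and apply Cohen's factorization theorem to write $f=g\ast h$ with $g,h\in L^1(G)$. Unfolding the Jordan products $g\circ(h\ast\mu)=\tfrac12(g\ast h\ast\mu+h\ast\mu\ast g)$ and $h\circ(\mu\ast g)=\tfrac12(h\ast\mu\ast g+\mu\ast g\ast h)$ and using associativity of convolution yields the algebraic identities
\[
f\ast\mu = f\circ\mu + g\circ(h\ast\mu) - h\circ(\mu\ast g),\qquad \mu\ast f = f\circ\mu - g\circ(h\ast\mu) + h\circ(\mu\ast g),
\]
and hence
\[
p_f(\mu) \le 2q_f(\mu) + 2\|g\circ(h\ast\mu)\|_1 + 2\|h\circ(\mu\ast g)\|_1.
\]
The proof then reduces to dominating the two ``twisted'' terms on the right by finitely many $q$-seminorms of $\mu$ itself.

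To accomplish this I would expand $h\ast\mu = h\circ\mu+\tfrac12(h\ast\mu-\mu\ast h)$ so that the offending summand splits into a piece controlled by $q_h(\mu)$ and a commutator tail involving $[h,\mu]$. The commutator tail is handled by the Jacobi-type identity
\[
[[a,b],\mu] = 4\bigl(a\circ(b\circ\mu) - b\circ(a\circ\mu)\bigr),
\]
which yields $\|[[a,b],\mu]\|_1 \le 4\|a\|_1 q_b(\mu) + 4\|b\|_1 q_a(\mu)$; iterating Cohen's factorization on the factors $g$ and $h$ and combining with this commutator bound should reduce $p_f(\mu)$, after finitely many steps, to a finite sum of $q_{g_i}(\mu)$. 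The main obstacle is precisely the threat of circularity: the crude estimate $\|g\circ(h\ast\mu)\|_1 \le \|g\|_1\|h\ast\mu\|_1$ replaces the question by one of the same kind, so any successful bound must be routed through Jordan products alone, eliminating every residual associative product from the right-hand side.
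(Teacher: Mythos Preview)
Your first inequality $q_f\le\tfrac12 p_f$ is correct and matches the paper. The problem is entirely in the reverse direction.

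You yourself flag the circularity, and it is real: after writing $f=g\ast h$ and reaching
\[
p_f(\mu)\le 2q_f(\mu)+2\|g\circ(h\ast\mu)\|_1+2\|h\circ(\mu\ast g)\|_1,
\]
the residual terms still contain the associative products $h\ast\mu$ and $\mu\ast g$. Splitting $h\ast\mu=h\circ\mu+\tfrac12[h,\mu]$ trades the problem for $\|g\circ[h,\mu]\|_1$, and the Jacobi-type identity you quote bounds $[[a,b],\mu]$, not $[h,\mu]$; you would need $h$ to be a commutator, which it is not in general. Iterating Cohen's factorization on $g$ and $h$ does not help either: each factorization reproduces a tail of exactly the same shape, so the process never terminates. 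As written, the argument does not close.

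The paper avoids this trap by a different factorization. Instead of $f=g\ast h$, it uses a \emph{symmetric} factorization $f=g\ast h\ast g$ (from the Jordan-algebra factorization theorem of Akkar--Laayouni) and then a polarization identity to write $f=\sum_{k=1}^4 f_k^2$ as a sum of four \emph{squares} in $L^1(G)$. Squares are exactly what make the Jordan calculus terminate in one step: for any $a\in L^1(G)$ one has the identity
\[
\mu\ast a^2 = \mu\circ a^2 + (\mu\circ a)\ast a - a\ast(\mu\circ a),
\]
and similarly for $a^2\ast\mu$, so that
\[
p_{a^2}(\mu)\le 2q_{a^2}(\mu)+4\|a\|_1\,q_a(\mu).
\]
Summing over $k$ gives $p_f$ dominated by a finite combination of $q$-seminorms, with no residual associative products left over. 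The missing idea in your attempt is precisely this reduction to squares; once you have it, no iteration is needed.
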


\begin{proof}
	It is clear that, for each $f\in L^1(G)$,
	\begin{equation*}
		q_f\le \tfrac{1}{2}p_f.
	\end{equation*}

	We now proceed to show that for each $f\in L^1(G)$ there exist $f_1,f_2,f_3,f_4\in L^1(G)$ such that
	\begin{equation}\label{eq1653}
		p_f\le \sum_{k=1}^4\bigl(2q_{f_{k^{2}}}+4\left\Vert f_k\right\Vert_1 q_{f_k}\bigr).
	\end{equation}
	By~\cite[Theorem~II.16]{AL}, there exist $g,h\in L^1(G)$ such that $f=g\ast h\ast g$, and taking
	\begin{align*}
		f_1  =\frac{1}{\sqrt{2}}\left(g\circ h+g\right), \ 
		f_2  =\frac{i}{\sqrt{2}}\left(g\circ h-g\right), \ 
		f_3  =\frac{1}{2}\left(g^2-h\right),              \text{ and }
		f_4  =\frac{i}{2}\left(g^2+h\right)
	\end{align*}
	we get
	\begin{align}
		f & =\sum_{k=1}^4 f_{k^{2}},\notag \\
	\shortintertext{so that}
		p_f & \le\sum_{k=1}^4 p_{f_{k^{2}}}. \label{eq1654}
	\end{align}
	For each $k\in\{1,2,3,4\}$ and each $\mu\in M(G)$, we have
	\begin{equation*}
		\begin{split}
			\mu\ast f_k^2 & =\mu\circ f_k^2+(\mu\circ f_k)\ast f_k-f_k\ast (\mu\circ f_k), \\
			f_k^2\ast\mu  & =\mu\circ f_k^2+f_k\ast(\mu\circ f_k)-(\mu\circ f_k)\ast f_k,
		\end{split}
	\end{equation*}
	whence
	\begin{equation}\label{eq1655}
		p_{f_k^2}(\mu)\le 2q_{f_{k^{2}}}(\mu)+4\Vert f_k\Vert_1 q_{f_k}(\mu).
	\end{equation}
	Finally,~\eqref{eq1654} and~\eqref{eq1655} give~\eqref{eq1653}.
\end{proof}

\begin{lemma}\label{1738}
	Let $G$ and $H$ be locally compact groups, and
	let $\Phi\colon L^1(G)\to L^1(H)$ be a Jordan isomorphism.
	Then there exists a unique Jordan isomorphism $\bar{\Phi}\colon M(G)\to M(H)$
	which extends $\Phi$ and which is continuous with respect to the strict topology on both $M(G)$ and $M(H)$.
	Furthermore, $\Vert\bar{\Phi}\Vert=\Vert\Phi\Vert$.
\end{lemma}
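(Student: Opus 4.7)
My plan is to construct $\bar{\Phi}$ by strictly continuous extension. Uniqueness will follow automatically from the strict density of $L^1(G)$ in $M(G)$, so the work is existence. Fix $\mu\in M(G)$ and a bounded approximate identity $(e_\alpha)$ in $L^1(G)$, and form $f_\alpha:=e_\alpha\ast\mu\in L^1(G)$; this net is norm-bounded by a constant multiple of $\Vert\mu\Vert$ and converges to $\mu$ in the strict topology on $M(G)$.

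The key observation is that, for each $g\in L^1(H)$, the Jordan identity for $\Phi$ gives
\[
g\circ\Phi(f_\alpha)=\Phi\bigl(\Phi^{-1}(g)\bigr)\circ\Phi(f_\alpha)=\Phi\bigl(\Phi^{-1}(g)\circ f_\alpha\bigr).
\]
With $h:=\Phi^{-1}(g)\in L^1(G)$, the previous lemma says $q_h(f_\alpha-\mu)=\Vert h\circ(f_\alpha-\mu)\Vert_1\to 0$, so the right-hand side converges in $L^1(H)$-norm to $\Phi(h\circ\mu)$. Hence $(\Phi(f_\alpha))$ is strictly Cauchy in $M(H)$ and, by the bound $\Vert\Phi(f_\alpha)\Vert_1\le\Vert\Phi\Vert\,\Vert f_\alpha\Vert_1$, norm-bounded; Banach--Alaoglu therefore yields a weak-$\ast$ cluster point $\nu\in M(H)$.

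To see that $\nu$ is the strict limit of $(\Phi(f_\alpha))$, I exploit that $C_0(H)$ is a Banach $M(H)$-bimodule, so for each fixed $g\in L^1(H)$ the maps $\eta\mapsto g\ast\eta$ and $\eta\mapsto\eta\ast g$ are weak-$\ast$ continuous on $M(H)$, and hence so is $\eta\mapsto g\circ\eta$. Combined with the norm convergence established above, this forces
\[
g\circ\nu=\Phi\bigl(\Phi^{-1}(g)\circ\mu\bigr)\quad\forall g\in L^1(H).
\]
Such $\nu$ is unique: if $g\circ\eta=0$ for all $g\in L^1(H)$, then convolving with a bounded approximate identity of $L^1(H)$ and using separate strict continuity of convolution in $M(H)$ yields $\eta=0$. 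Thus the full net $(\Phi(f_\alpha))$ converges weak-$\ast$, and by the previous lemma also strictly, to a well-defined $\bar{\Phi}(\mu):=\nu$; weak-$\ast$ lower semicontinuity of the norm gives $\Vert\bar{\Phi}(\mu)\Vert\le\Vert\Phi\Vert\,\Vert\mu\Vert$.

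The remaining items reduce to density arguments. Strict-to-strict continuity of $\bar{\Phi}$ is immediate from the defining identity via $q_g(\bar{\Phi}(\mu_1)-\bar{\Phi}(\mu_2))\le\Vert\Phi\Vert\,q_{\Phi^{-1}(g)}(\mu_1-\mu_2)$. The Jordan property $\bar{\Phi}(\mu_1\circ\mu_2)=\bar{\Phi}(\mu_1)\circ\bar{\Phi}(\mu_2)$ is obtained by strictly approximating $\mu_1,\mu_2$ by bounded $L^1(G)$-nets, applying the Jordan identity $\Phi(f_\alpha\circ f'_\beta)=\Phi(f_\alpha)\circ\Phi(f'_\beta)$, and passing to the strict limit twice using separate strict continuity of the convolution on both $M(G)$ and $M(H)$. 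Running the same construction for $\Phi^{-1}$ produces a strict-continuous extension $\overline{\Phi^{-1}}\colon M(H)\to M(G)$; the compositions $\overline{\Phi^{-1}}\circ\bar{\Phi}$ and $\bar{\Phi}\circ\overline{\Phi^{-1}}$ coincide with the identity on the strictly dense subalgebras $L^1$, hence everywhere, giving bijectivity. The reverse inequality $\Vert\bar{\Phi}\Vert\ge\Vert\Phi\Vert$ is immediate from extension. I expect the delicate step to be confirming that the weak-$\ast$ cluster point actually represents a bona fide Jordan multiplier; the $M(H)$-bimodule structure of $C_0(H)$ combined with the Jordan identity is what lets me pass from weak-$\ast$ cluster point to genuine strict limit.
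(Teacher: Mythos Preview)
Your argument is correct and takes a genuinely different route from the paper's. Both proofs pass through the same ``multiplier identity'' $g\circ\bar\Phi(\mu)=\Phi(\Phi^{-1}(g)\circ\mu)$ (the paper's equation labelled (j6)), but the paper reaches it by defining $\bar\Phi(\mu)$ as an ultrafilter weak-$\ast$ limit of $\Phi(\mu\circ e_\gamma)$ and then invoking a four-variable Jordan identity from Jacobson (identity (B$_1$)) together with a lengthy limit computation. You obtain the identity essentially for free: since $g\circ\Phi(f_\alpha)=\Phi(\Phi^{-1}(g)\circ f_\alpha)$ is just the Jordan property of $\Phi$ on $L^1(G)$, norm-convergence of the right-hand side is immediate from strict convergence $f_\alpha\to\mu$, and the weak-$\ast$ continuity of $\eta\mapsto g\circ\eta$ pins the cluster point. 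The paper then proves $\bar\Phi(\mu^2)=\bar\Phi(\mu)^2$ by a second special-purpose computation; you instead observe that the multiplier identity already yields strict continuity of $\bar\Phi$, so the Jordan property extends from $L^1(G)\times L^1(G)$ to $M(G)\times M(G)$ by two applications of separate strict continuity of convolution. The trade-off is that the paper's argument is more self-contained (it does not need Lemma~2.1 until the very end), whereas yours leans on that lemma earlier but avoids the Jacobson identity entirely and is considerably shorter. One small point worth making explicit: you use $\Vert\Phi\Vert<\infty$ without justification; the paper cites Sinclair's automatic-continuity theorem for Jordan epimorphisms onto semisimple Banach algebras, and you should too.
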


\begin{proof}
	We begin by observing that $\Phi$ is bounded,
	because every Jordan homomorphism from a Banach algebra onto a semisimple Banach algebra is bounded (see~\cite{S}).

	Let $(e_\gamma)_{\gamma\in\Gamma}$ be an approximate identity for $L^1(G)$ of bound $1$.
	Let $\mathcal{U}$ be an ultrafilter on $\Gamma$ containing the order filter on $\Gamma$. It follows from the Banach-Alaoglu theorem that each bounded subset of $M(H)$ is relatively compact with respect to the topology $\sigma(M(H),C_0(H))$. Consequently, each bounded net
	$(\mu_\gamma)_{\gamma\in\Gamma}$ in $M(H)$ has a unique limit with respect to the topology $\sigma(M(H),C_0(H))$ along the ultrafilter $\mathcal{U}$, and we write $\lim_\mathcal{U}\mu_\gamma$ for this limit.
	It is worth noting that
	\begin{equation}\label{eq1832}
		\Bigl\Vert\lim_\mathcal{U}\mu_\gamma\Bigr\Vert \le \lim_\mathcal{U}\Vert \mu_\gamma\Vert \le \sup_{\gamma\in\Gamma}\Vert\mu_\gamma\Vert.
	\end{equation}
	Indeed, for each $\phi\in C_0(H)$ such that $\Vert \phi\Vert_\infty=1$, we have
	\[
		\left\vert\langle \mu_\gamma,\phi\rangle\right\vert 
		\le \left\Vert \mu_\gamma\right\Vert 
		\le\sup_{\gamma\in\Gamma} \left\Vert \mu_\gamma\right\Vert,
	\]
	and hence
	\[
		\Bigl\vert\Bigl\langle\lim_\mathcal{U}\mu_\gamma,\phi\Bigr\rangle\Bigr\vert =
		\lim_\mathcal{U} \vert\langle\mu_\gamma,\phi\rangle\vert \le \lim_\mathcal{U}\Vert \mu_\gamma\Vert\le
		\sup_{\gamma\in\Gamma}\Vert \mu_\gamma\Vert,
	\]
	which establishes~\eqref{eq1832}.
	Since $\mathcal{U}$ refines the order filter on $\Gamma$, we see that
	\begin{equation}\label{eq2101}
		\bigl(\mu_\gamma\bigr)_{\gamma\in\Gamma}\to \mu \ \text{ in norm }
		\implies
		\lim_\mathcal{U}\mu_\gamma=\mu.
	\end{equation}
	It should be pointed out that, for each $\nu\in M(H)$,
	\begin{equation}\label{eq2032}
		\lim_\mathcal{U}\bigl(\mu_\gamma\circ\nu\bigr)=\Bigl(\lim_\mathcal{U}\mu_\gamma\Bigr)\circ\nu.
	\end{equation}
	Indeed,
	for each $\phi\in C_0(H)$,
	\[
		\left\langle\mu_\gamma\circ\nu,\phi\right\rangle=
		\langle\mu_\gamma,\tfrac{1}{2}(\phi\cdot\nu+\nu\cdot\phi)\rangle\to
		\Bigl\langle\lim_\mathcal{U}\mu_\gamma,\tfrac{1}{2}(\phi\cdot\nu+\nu\cdot\phi)\Bigr\rangle=
		\Bigl\langle\Bigl(\lim_\mathcal{U}\mu_\gamma\Bigr)\circ\nu,\phi\Bigr\rangle.
	\]

	We now proceed to define the map $\bar{\Phi}$.
	For each $\mu\in M(G)$, we have
	\begin{equation}\label{eq1833}
		\Vert\Phi(\mu\circ e_\gamma)\Vert 
		\le \left\Vert\Phi\right\Vert \left\Vert\mu\circ e_\gamma\right\Vert
		\le \norm{\Phi} \norm{\mu} 
		\quad\forall \gamma\in\Gamma,
	\end{equation}
	and hence the net $\left(\Phi(\mu\circ e_\gamma)\right)_{\gamma\in\Gamma}$ is bounded.
	Consequently we can define a map $\bar{\Phi}\colon M(G)\to M(H)$ by
	\[
		\bar{\Phi}(\mu)=\lim_\mathcal{U}\Phi(\mu\circ e_\gamma)
		\quad\forall\mu\in M(G).
	\]

	For each $f\in L^1(G)$, we have
	\[
		\bigl(f\circ e_\gamma\bigr)_{\gamma\in\Gamma}\to f \ \text{ in norm,}
	\]
	the continuity of $\Phi$ then implies
	\[
		\bigl(\Phi(f\circ e_\gamma)\bigr)_{\gamma\in\Gamma}\to\Phi(f) \text{ in norm,}
	\]
	and~\eqref{eq2101} now shows that
	\[
		\bar{\Phi}(f)=\lim_\mathcal{U}\Phi(f \circ e_\gamma)=\Phi(f).
	\]
	Thus $\bar{\Phi}$ is an extension of $\Phi$.

	The linearity of the limit along an ultrafilter on a topological
	linear space gives the linearity of $\bar{\Phi}$.
	Take $\mu\in M(G)$.
	From~\eqref{eq1832} and~\eqref{eq1833} we deduce that
	$\Vert\bar{\Phi}(\mu)\Vert\le\Vert\Phi\Vert\Vert \mu\Vert$,
	which gives the continuity of $\bar{\Phi}$ and $\Vert\bar{\Phi}\Vert\le\Vert\Phi\Vert$.
	Since $\bar{\Phi}$ extends $\Phi$, the preceding inequality turns into $\Vert\bar{\Phi}\Vert=\Vert\Phi\Vert$.

	Our next concern will be to prove that $\bar{\Phi}$ is a Jordan homomorphism.
	The starting point of our process is the following identity,
	taken from~\cite[Identity~(B$_1$), page~33]{Jac},
	\begin{multline}\label{j1}
		\bigl(\mu_1\circ(\mu_2\circ\mu_3)\bigr)\circ\mu_4+
		\bigl(\mu_1\circ(\mu_2\circ\mu_4)\bigr)\circ\mu_3+
		\bigl(\mu_1\circ(\mu_3\circ\mu_4)\bigr)\circ\mu_2
		\\ =
		(\mu_1\circ\mu_2)\circ(\mu_3\circ\mu_4)+
		(\mu_1\circ\mu_3)\circ(\mu_2\circ\mu_4)+
		(\mu_1\circ\mu_4)\circ(\mu_2\circ\mu_3)
	\end{multline}
	for all $\mu_1,\mu_2,\mu_3,\mu_4\in M(G)$.
	Let $f,g\in L^1(G)$ and let $\mu\in M(G)$.
	We take $\mu_1=e_\gamma$ ($\gamma\in\Gamma$), $\mu_2=f$, $\mu_3=g$, and $\mu_4=\mu$ in~\eqref{j1}
	to obtain
	\begin{multline}\label{j2}
		\bigl(e_\gamma\circ(f\circ g)\bigr)\circ\mu+
		\bigl(e_\gamma\circ(f\circ\mu)\bigr)\circ g+
		\bigl(e_\gamma\circ(g\circ\mu)\bigr)\circ f
		\\ =
		(e_\gamma\circ f)\circ(g\circ\mu)+
		(e_\gamma\circ g)\circ(f\circ\mu)+
		(e_\gamma\circ\mu)\circ(f\circ g).
	\end{multline}
	We now apply $\Phi$ to both sides of~\eqref{j2} to get
	\begin{multline}\label{j22}
		\Phi\Bigl(\bigl(e_\gamma\circ(f\circ g)\bigr)\circ\mu\Bigr)+
		\Phi\Bigl(\bigl(e_\gamma\circ(f\circ\mu)\bigr)\circ g\Bigr)+
		\Phi\Bigl(\bigl(e_\gamma\circ(g\circ\mu)\bigr)\circ f\Bigr)
		\\ =
		\Phi(e_\gamma\circ f)\circ\Phi(g\circ\mu)+
		\Phi(e_\gamma\circ g)\circ\Phi(f\circ\mu) +
		\Phi(e_\gamma\circ\mu)\circ\Phi(f\circ g).
	\end{multline}
	Since
	\begin{equation*}
		\begin{split}
			\Bigl(\bigl(e_\gamma\circ(f\circ g)\bigr)\circ\mu\Bigr)_{\gamma\in\Gamma}
			 & \to (f\circ g)\circ\mu \ \text{ in norm}, \\
			\Bigl(\bigl(e_\gamma\circ(f\circ\mu)\bigr)\circ g\Bigr)_{\gamma\in\Gamma}
			 & \to (f\circ\mu)\circ g \ \text{ in norm}, \\
			\Bigl(\bigl(e_\gamma\circ(g\circ\mu)\bigr)\circ f\Bigr)_{\gamma\in\Gamma}
			 & \to (g\circ\mu)\circ f \ \text{ in norm}, \\
			\bigl(e_\gamma\circ f\bigr)_{\gamma\in\Gamma}
			 & \to f \ \text{ in norm},                  \\
			\bigl(e_\gamma\circ g\bigr)_{\gamma\in\Gamma}
			 & \to g \ \text{ in norm},
		\end{split}
	\end{equation*}
	the boundedness of $\Phi$ yields
	\begin{equation*}
		\begin{split}
			\Bigl(\Phi\Bigl(\bigl(e_\gamma\circ(f\circ g)\bigr)\circ\mu\Bigr)\Bigr)_{\gamma\in\Gamma}
			 & \to \Phi\bigl((f\circ g)\circ\mu\bigr) \ \text{ in norm}, \\
			\Bigl(\Phi\Bigl(\bigl(e_\gamma\circ(f\circ\mu)\bigr)\circ g\Bigr)\Bigr)_{\gamma\in\Gamma}
			 & \to \Phi\bigl((f\circ\mu)\circ g\bigr) \ \text{ in norm}, \\
			\Bigl(\Phi\Bigl(\bigl(e_\gamma\circ(g\circ\mu)\bigr)\circ f\Bigr)\Bigr)_{\gamma\in\Gamma}
			 & \to \Phi\bigl((g\circ\mu)\circ f\bigr) \ \text{ in norm}, \\
			\Bigl(\Phi(e_\gamma\circ f)\Bigr)_{\gamma\in\Gamma}
			 & \to\Phi(f) \ \text{ in norm},                             \\
			\Bigl(\Phi(e_\gamma\circ g)\Bigr)_{\gamma\in\Gamma}
			 & \to\Phi(g) \ \text{ in norm},
		\end{split}
	\end{equation*}
	and the last two limits also give
	\begin{equation*}
		\begin{split}
			\Bigl(\Phi(e_\gamma\circ f)\circ\Phi(g\circ\mu)\Bigr)_{\gamma\in\Gamma}
			 & \to\Phi(f)\circ\Phi(g\circ\mu) \ \text{ in norm}, \\
			\Bigl(\Phi(e_\gamma\circ g)\circ\Phi(f\circ\mu)\Bigr)_{\gamma\in\Gamma}
			 & \to\Phi(g)\circ\Phi(f\circ\mu) \ \text{ in norm}.
		\end{split}
	\end{equation*}
	From~\eqref{eq2101} it follows that
	\begin{equation*}
		\begin{split}
			\lim_{\mathcal{U}}\Phi\Bigl(\bigl(e_\gamma\circ(f\circ g)\bigr)\circ\mu\Bigr)
			 & =\Phi\bigl((f\circ g)\circ\mu\bigr), \\
			\lim_{\mathcal{U}}\Phi\Bigl(\bigl(e_\gamma\circ(f\circ\mu)\bigr)\circ g\Bigr)
			 & =\Phi\bigl((f\circ\mu)\circ g\bigr), \\
			\lim_{\mathcal{U}}\Phi\Bigl(\bigl(e_\gamma\circ(g\circ\mu)\bigr)\circ f\Bigr)
			 & =\Phi\bigl((g\circ\mu)\circ f\bigr), \\
			\lim_\mathcal{U}\Phi(e_\gamma\circ f)\circ\Phi(g\circ\mu)
			 & =\Phi(f)\circ\Phi(g\circ\mu),        \\
			\lim_\mathcal{U}\Phi(e_\gamma\circ g)\circ\Phi(f\circ\mu)
			 & =\Phi(g)\circ\Phi(f\circ\mu).
		\end{split}
	\end{equation*}
	On the other hand, by definition,
	$
		\lim_\mathcal{U}\Phi(e_\gamma\circ\mu)=\bar{\Phi}(\mu)
	$
	and hence~\eqref{eq2032} gives
	\[
		\lim_\mathcal{U}\Phi(e_\gamma\circ\mu)\circ\Phi(f\circ g)=\bar{\Phi}(\mu)\circ\Phi(f\circ g).
	\]
	Taking the limit in~\eqref{j22} along the ultrafilter $\mathcal{U}$, and using the preceding observations,
	we obtain
	\begin{multline*}\label{j3}
		\Phi\bigl((f\circ g)\circ\mu\bigr)+
		\Phi\bigl((f\circ\mu)\circ g\bigr)+
		\Phi\bigl((g\circ\mu)\circ f\bigr)
		\\ =
		\Phi(f)\circ\Phi(g\circ\mu)+
		\Phi(g)\circ\Phi(f\circ\mu)+
		\bar{\Phi}(\mu)\circ\Phi(f\circ g),
	\end{multline*}
	whence
	\begin{equation}\label{j4}
		\Phi\bigl((f\circ g)\circ\mu\bigr)
		=
		\bar{\Phi}(\mu)\circ\Phi(f\circ g).
	\end{equation}
	We now take $g=e_\gamma$ ($\gamma\in\Gamma$) in~\eqref{j4} to obtain
	\begin{equation}\label{j5}
		\Phi\bigl((f\circ e_\gamma)\circ\mu\bigr)
		=
		\bar{\Phi}(\mu)\circ\Phi(f\circ e_\gamma),
	\end{equation}
	and, taking limits in~\eqref{j5} and using the continuity of $\Phi$ and that
	\begin{equation*}
		\begin{split}
			\bigl(f\circ e_\gamma\bigr)_{\gamma\in\Gamma}
			 & \to f \ \text{ in norm,}         \\
			\bigl((f\circ e_\gamma)\circ\mu\bigr)_{\gamma\in\Gamma}
			 & \to f\circ\mu \ \text{ in norm,} \\
		\end{split}
	\end{equation*}
	we see that
	\begin{equation}\label{j6}
		\Phi(f\circ\mu)
		=
		\bar{\Phi}(\mu)\circ\Phi(f).
	\end{equation}

	We are now in a position to show that $\bar{\Phi}$ is a Jordan homomorphism.
	For this purpose we set $\mu\in M(G)$ and $g\in L^1(G)$. From~\eqref{j6} we see that, for each $\gamma\in\Gamma$,
	\[
		\Phi\bigl(\mu\circ(\mu\circ e_\gamma)\bigr)=
		\bar{\Phi}(\mu)\circ\Phi(\mu\circ e_\gamma)
	\]
	and hence that
	\begin{equation}\label{eq1047}
			\Phi\Bigl(\bigl(\mu\circ(\mu\circ e_\gamma)\bigr)\circ g\Bigr)
			  =
			\Phi\bigl(\mu\circ(\mu\circ e_\gamma)\bigr)\circ\Phi(g) 
			  =
			\bigl(\bar{\Phi}(\mu)\circ\Phi(\mu\circ e_\gamma)\bigr)\circ\Phi(g).
	\end{equation}
	Since
	\[
		\Bigl(\bigl(\mu\circ(\mu\circ e_\gamma)\bigr)\circ g\Bigr)_{\gamma\in\Gamma}\to (\mu\circ\mu)\circ g
		\ \text{ in norm},
	\]
	and $\Phi$ is continuous, we have
	\[
		\Bigl(\Phi\Bigl(\bigl(\mu\circ(\mu\circ e_\gamma)\bigr)\circ g\Bigr)\Bigr)
		\to\Phi(\mu^2\circ g)
		\ \text{ in norm.}
	\]
	On the other hand,
	$
		\lim_\mathcal{U}\Phi(\mu\circ e_\gamma)=\bar{\Phi}(\mu)
	$
	and~\eqref{eq2032} now leads to
	\[
		\lim_\mathcal{U}
		\bigl(\bar{\Phi}(\mu)\circ\Phi(\mu\circ e_\gamma)\bigr)\circ\Phi(g)=
		\bigl(\bar{\Phi}(\mu)\circ\bar{\Phi}(\mu)\bigr)\circ\Phi(g).
	\]
	We thus get
	$
		\Phi(\mu^2\circ g)=\bigl(\bar{\Phi}(\mu)\circ\bar{\Phi}(\mu)\bigr)\circ\Phi(g)
	$
	and, by~\eqref{j6},
	\[
		\bigl(\bar{\Phi}(\mu^2)-\bar{\Phi}(\mu)\circ\bar{\Phi}(\mu)\bigr)\circ\Phi(g)=0.
	\]
	The surjectivity of $\Phi$ now implies
	\[
		\bigl(\bar{\Phi}(\mu^2)-\bar{\Phi}(\mu)\circ\bar{\Phi}(\mu)\bigr)\circ h=0\quad\forall h\in L^1(H).
	\]
	We abbreviate $\bar{\Phi}(\mu^2)-\bar{\Phi}(\mu)\circ\bar{\Phi}(\mu)$ to $\nu$. What is left is to show that $\nu=0$.
	To this end we observe that, for each $h\in L^1(H)$,
	\[
		0=\bigl(\underbrace{\nu\ast e_\gamma+e_\gamma\ast\nu}_{=0})\ast h=
		\nu\ast(e_\gamma\ast h)+e_\gamma\ast(\nu\ast h)\to 2\nu\ast h
		\ \text{ in norm},
	\]
	and consequently $\nu\ast h=0$. 
	Since this holds for arbitrary $h\in L^1(H)$,
	from~\cite[Corollary 3.3.24]{D} it may be concluded that $\nu=0$, as required.

	Our next goal is the strict continuity of $\bar{\Phi}$.
	For each $h\in L^1(H)$ and each $\mu\in M(G)$, we have
	\[
		\begin{split}
			q_h \left( \bar{\Phi}(\mu)\right) & =
			\left\Vert h\circ\bar{\Phi}(\mu)\right\Vert_1 =
			\left\Vert \Phi(\Phi^{-1}(h))\circ \bar{\Phi}(\mu)\right\Vert_1 =
			\left\Vert\Phi(\Phi^{-1}(h)\circ\mu)\right\Vert_1 \\
			                                  & \le
			\left\Vert\Phi\right\Vert \left\Vert \Phi^{-1}(h)\circ\mu\right\Vert_1 =
			\left\Vert\Phi \right\Vert q_{\Phi^{-1}(h)}(\mu),
		\end{split}
	\]
	which shows that $\bar{\Phi}$ is continuous with respect to the strict topology on both $M(G)$ and $M(H)$.

	We proceed to prove that $\bar{\Phi}$ is a bijection from $M(G)$ onto $M(H)$.
	Write $\Psi=\Phi^{-1}$ and consider the map $\bar{\Psi}$. Since
	\[
		\Psi\Phi=I_{L^1(G)}, \ \Phi\Psi=I_{L^1(H)},
	\]
	$L^1(G)$ and $L^1(H)$ are dense in $M(G)$ and $M(H)$, respectively, in the strict topology, and
	both $\bar{\Phi}$ and $\bar{\Psi}$ are continuous with respect to the strict topology,
	it may be concluded that
	\[
		\bar{\Psi}\bar{\Phi}=I_{M(G)}, \ \bar{\Phi}\bar{\Psi}=I_{M(H)}.
	\]
	Hence $\bar{\Phi}$ is a Jordan isomorphism.

	Finally,
	since $L^1(G)$ is dense in $M(G)$ in the strict topology, it follows that each map $\Psi\colon M(G)\to M(H)$
	which is continuous with respect to the strict topology is uniquely specified by its values on $L^1(G)$.
	This gives the uniqueness assertion of the lemma.
\end{proof}

Throughout, $\mathbb{T}$ stands for the circle group.

\begin{theorem}\label{mainTheo}
	Let $G$ and $H$ be locally compact groups, and
	let $\Phi\colon L^1(G)\to L^1(H)$ be a contractive Jordan isomorphism.
	Then one of the following holds:
	\begin{enumerate}
		\item[\rm(i)]
		      $\Phi$ is an isometric isomorphism, and is actually expressible as
		      \[
			      \Phi f(t)=c\chi(t) f(\varphi(t))
		      \]
		      for each $f\in L^1(G)$ and almost all $t\in H$,
		      where
		      \begin{itemize}
			      \item
			            $\varphi\colon H\to G$ is a homeomorphic group isomorphism,
			      \item
			            $\chi\colon H\to\mathbb{T}$ is a continuous group homomorphism, and
			      \item
			            $c$ is the constant value of the ratio $\lambda_G(\varphi(E))/\lambda_H(E)$
			            for each measurable set $E\subseteq H$ with finite non-zero measure.
		      \end{itemize}
		\item[\rm(ii)]
		      $\Phi$ is an isometric anti-isomorphism, and is actually expressible as
		      \[
			      \Phi f(t)=c\chi(t) f(\varphi(t))\Delta_H(t^{-1})
		      \]
		      for each $f\in L^1(G)$ and almost all $t\in H$,
		      where
		      \begin{itemize}
			      \item
			            $\varphi\colon H\to G$ is a homeomorphic group anti-isomorphism,
			      \item
			            $\chi\colon H\to\mathbb{T}$ is a continuous group homomorphism,
			      \item
			            $c$ is the constant value of the ratio $\lambda_G(\varphi(E^{-1}))/\lambda_H(E)$
			            for each measurable set $E\subseteq H$ with finite non-zero measure, and
			      \item
			            $\Delta_H$ is the modular function of $H$.
		      \end{itemize}
	\end{enumerate}
	In particular, $G$ and $H$ are isomorphic as topological groups.
\end{theorem}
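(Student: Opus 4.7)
The plan is to use Lemma~\ref{1738} to replace $\Phi$ with its strictly continuous extension $\bar\Phi\colon M(G)\to M(H)$, a contractive Jordan isomorphism of measure algebras, and then to determine $\bar\Phi$ from its action on the point masses $\delta_t$, $t\in G$. Since a Jordan isomorphism preserves the identity, $\bar\Phi(\delta_{e_G})=\delta_{e_H}$.

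The key step is the claim that, for every $t\in G$, the image $\bar\Phi(\delta_t)$ belongs to $\mathbb{T}\cdot\{\delta_s:s\in H\}$. Setting $\mu=\bar\Phi(\delta_t)$ and $\nu=\bar\Phi(\delta_{t^{-1}})$, contractivity yields $\|\mu\|,\|\nu\|\le 1$, while the Jordan identity together with $\delta_t\circ\delta_{t^{-1}}=\delta_{e_G}$ gives $\tfrac{1}{2}(\mu\ast\nu+\nu\ast\mu)=\delta_{e_H}$. Extremality of $\delta_{e_H}$ in the closed unit ball of $M(H)$ then forces $\mu\ast\nu=\nu\ast\mu=\delta_{e_H}$, so $\mu$ is a two-sided convolution invertible in $M(H)$ with $\|\mu\|=\|\mu^{-1}\|=1$. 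A classical consequence of Wendel's theorem identifies the group of such elements with $\mathbb{T}\cdot\{\delta_s:s\in H\}$; hence $\bar\Phi(\delta_t)=\chi(t)\delta_{\psi(t)}$ for some maps $\chi\colon G\to\mathbb{T}$ and $\psi\colon G\to H$.

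Feeding this into $\bar\Phi(\delta_s\circ\delta_t)=\bar\Phi(\delta_s)\circ\bar\Phi(\delta_t)$ and comparing point masses shows that $\chi$ is multiplicative and that, for every $s,t\in G$, either $\psi(st)=\psi(s)\psi(t)$ or $\psi(st)=\psi(t)\psi(s)$; in other words, $\psi$ is a half-homomorphism. By the group-theoretic dichotomy from~\cite{Sc}, $\psi$ is either a homomorphism or an anti-homomorphism. In either case the map $t\mapsto\chi(t)\delta_{\psi(t)}$ extends linearly to an algebra (respectively, anti-)homomorphism on the strictly dense subspace $\mathrm{span}\{\delta_t:t\in G\}$ of $M(G)$ that coincides with $\bar\Phi$ there, and strict continuity of $\bar\Phi$ then promotes this to an algebra (respectively, anti-)homomorphism on all of $M(G)$.

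It remains to pass to the explicit formula on $L^1$, to upgrade contractive to isometric, and to identify $G$ with $H$ topologically. Consider the homomorphism case. Strict continuity of $\bar\Phi$ combined with strict continuity of $t\mapsto\delta_t$ makes $\psi$ continuous, injectivity of $\bar\Phi$ forces injectivity of $\psi$, and surjectivity of $\Phi$ together with the classical fact that a subgroup of positive Haar measure in a locally compact group is open forces $\psi(G)=H$; an open-mapping argument then upgrades $\varphi:=\psi^{-1}\colon H\to G$ to a topological group isomorphism. A change of variables based on uniqueness of Haar measure yields the formula of~(i), with $c$ the Haar-measure comparison constant, and simultaneously shows that $\Phi$ is isometric. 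The anti-homomorphism case reduces to the homomorphism case by composing with the canonical isometric anti-isomorphism $L^1(H)\to L^1(H^{\mathrm{op}})$, which is where the modular function $\Delta_H$ enters~(ii). The main obstacle is the identification $\bar\Phi(\delta_t)\in\mathbb{T}\cdot\delta_H$: this is the step where soft Jordan-algebraic data is turned into rigid convolution-algebraic data, through the interplay of the extremality of $\delta_{e_H}$ in the unit ball of $M(H)$ and Wendel's description of the unit-norm two-sided invertibles of $M(H)$.
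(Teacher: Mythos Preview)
Your proposal is correct and follows the same overall architecture as the paper: extend to $\bar\Phi$ on $M(G)$ via Lemma~\ref{1738}, show $\bar\Phi(\delta_t)=\chi(t)\delta_{\psi(t)}$, deduce that $\psi$ is a half-homomorphism and invoke~\cite{Sc}, extend by strict density, and finish with Wendel.

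There are two places where your argument differs from the paper's. First, for the identification $\bar\Phi(\delta_t)\in\mathbb{T}\cdot\delta_H$, the paper uses the Jordan triple product identity
\[
\mu\ast\nu\ast\mu=2\mu\circ(\mu\circ\nu)-(\mu\circ\mu)\circ\nu,
\]
applies it with $\mu=\delta_{t^{-1}}$ and $\nu=\delta_t\ast\Phi^{-1}(g)\ast\delta_t$, and obtains directly $\|\bar\Phi(\delta_t)\ast g\|_1=\|g\|_1$ for all $g\in L^1(H)$, whence Wendel~\cite[Theorem~3]{W2} gives the point-mass form. Your route via $\delta_t\circ\delta_{t^{-1}}=\delta_e$, extremality of $\delta_{e_H}$ in the unit ball of $M(H)$, and the resulting two-sided invertibility with $\|\mu\|=\|\mu^{-1}\|=1$ is a legitimate and arguably cleaner alternative; it lands on the same Wendel theorem after one more line ($\|\mu\ast g\|_1\le\|g\|_1=\|\mu^{-1}\ast\mu\ast g\|_1\le\|\mu\ast g\|_1$).

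Second, in the final step the paper does not attempt to establish continuity, bijectivity, or openness of $\psi$ directly. Once $\Phi$ is known to be a contractive isomorphism (respectively, anti-isomorphism) of $L^1$-algebras, it simply invokes~\cite[Theorem~5]{W2}, which already delivers the isometry, the explicit formula, and the topological group isomorphism $\varphi$. Your sketch of this step has genuine soft spots: extracting continuity of $\psi$ from strict continuity of $t\mapsto\chi(t)\delta_{\psi(t)}$ requires an extra argument (you must separate the $\mathbb{T}$-factor), and an ``open-mapping argument'' for bijective continuous homomorphisms of locally compact groups is not available without $\sigma$-compactness hypotheses. These gaps disappear if you replace that paragraph by a direct citation of~\cite[Theorem~5]{W2}, as the paper does; in the anti-isomorphism case one first composes with the canonical isometric anti-automorphism $g\mapsto g(\,\cdot^{-1})\Delta_H(\,\cdot^{-1})$ of $L^1(H)$, exactly as you indicate.
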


\begin{proof}
	Let $\bar{\Phi}\colon M(G)\to M(H)$ be the map given in Lemma~\ref{1738}.
	Our first purpose is to prove that there exist maps $\theta\colon G\to H$ and
	$\zeta\colon G\to\mathbb{T}$ uniquely specified by the condition
	\begin{equation}\label{j1003}
		\bar{\Phi}(\delta_t)=\zeta(t)\delta_{\theta(t)}
		\quad\forall t\in G.
	\end{equation}
	To this end, we will use the property that
	\[
		\mu\ast\nu\ast\mu=2\mu\circ(\mu\circ\nu)-(\mu\circ\mu)\circ\nu
		\quad\forall \mu,\nu\in M(G),
	\]
	which implies
	\begin{equation}\label{1750}
		\bar{\Phi}(\mu\ast\nu\ast\mu)=\bar{\Phi}(\mu)\ast\bar{\Phi}(\nu)\ast\bar{\Phi}(\mu)
		\quad\forall \mu,\nu\in M(G),
	\end{equation}
	because $\bar{\Phi}$ is a Jordan homomorphism.

	Set $t\in G$.
	For each $g\in L^1(H)$ we have
	\[
		\delta_{t^{-1}}\ast\delta_t\ast\Phi^{-1}(g)\ast\delta_t\ast\delta_{t^{-1}}=\Phi^{-1}(g),
	\]
	and applying $\bar{\Phi}$ to both sides of the above identity, and taking into account~\eqref{1750}, we see that
	\[
		\bar{\Phi}(\delta_{t^{-1}})\ast\bar{\Phi}(\delta_t)\ast g
		\ast\bar{\Phi}(\delta_t)\ast\bar{\Phi}(\delta_{t^{-1}})=g.
	\]
	Since $\Vert\bar{\Phi}\Vert=\Vert\Phi\Vert\le 1$, it follows that
	\[
		\begin{split}
			\left\Vert\bar{\Phi}(\delta_t)\ast g\right\Vert_1 & \le \left\Vert g\right\Vert_1
			=
			\left\Vert \bar{\Phi}(\delta_{t^{-1}})\ast\bar{\Phi}(\delta_{t})\ast g\ast\bar{\Phi}(\delta_{t})
			\ast\bar{\Phi}(\delta_{t^{-1}})\right\Vert_1                                                                 \\
			                                                  &
			\le \left\Vert \bar{\Phi}(\delta_{t^{-1}})\right\Vert \left\Vert\bar{\Phi}(\delta_{t})\ast g\right\Vert_1
			\left\Vert\bar{\Phi}(\delta_{t})\ast\bar{\Phi}(\delta_{t^{-1}})\right\Vert                                   \\
			                                                  & \le \left\Vert\bar{\Phi}(\delta_{t})\ast g\right\Vert_1,
		\end{split}
	\]
	which leads to
	\begin{equation}\label{1814}
		\left\Vert\bar{\Phi}(\delta_t)\ast g\right\Vert_1=\left\Vert g\right\Vert_1.
	\end{equation}
	Since the equation~\eqref{1814} holds true for each $g\in L^1(H)$, \cite[Theorem~3]{W2} now yields
	$\zeta(t)\in\mathbb{T}$ and $\theta(t)\in H$ such that
	\[
		\bar{\Phi}(\delta_t)=\zeta(t)\delta_{\theta(t)},
	\]
	and~\eqref{j1003} is proved.

	Our next objective is to prove that
	$\theta$ is a half-homomorphism in the sense of~\cite{Sc}, i.e.,
	\[
		\theta(st)\in\bigl\{\theta(s)\theta(t),\theta(t)\theta(s)\bigr\}
		\quad\forall s,t\in G,
	\]
	and that $\zeta$ is a group homomorphism.
	Set $s,t\in G$.
	We first observe that
	\[
		2\bar{\Phi}(\delta_s\circ\delta_t)  = \bar{\Phi}(\delta_{st}+\delta_{ts}) = \bar{\Phi}(\delta_{st})+\bar{\Phi}(\delta_{ts})
		= \zeta(st)\delta_{\theta(st)}+\zeta(ts)\delta_{\theta(ts)}
	\]
	and, being $\bar{\Phi}$ a Jordan homomorphism, we also have
	\[
		\begin{split}
			2\bar{\Phi}(\delta_s\circ\delta_t) & = 2\bar{\Phi}(\delta_s)\circ\bar{\Phi}(\delta_t) =
			2\zeta(s)\delta_{\theta(s)}\circ\zeta(t)\delta_{\theta(t)}                                                                  \\
			                                   & = \zeta(s)\zeta(t)\bigl(\delta_{\theta(s)\theta(t)}+\delta_{\theta(t)\theta(s)}\bigr).
		\end{split}
	\]
	We thus get
	\begin{equation}\label{jh1}
		\zeta(s)\zeta(t)\bigl(\delta_{\theta(s)\theta(t)}+\delta_{\theta(t)\theta(s)}\bigr)=
		\zeta(st)\delta_{\theta(st)}+\zeta(ts)\delta_{\theta(ts)}.
	\end{equation}
	We evaluate both sides of~\eqref{jh1} at the set $E=\{\theta(st)\}$, and we will divide the discussion of the
	outcome into two cases.

	(a) Suppose that $\theta(st)\neq\theta(ts)$. Then~\eqref{jh1} gives
	\[
		\zeta(s)\zeta(t)
		\bigl(\underbrace{\delta_{\theta(s)\theta(t)}(E)}_{\varepsilon}
		+\underbrace{\delta_{\theta(t)\theta(s)}(E)}_{\varepsilon'}\bigr)
		=
		\zeta(st).
	\]
	Since $\zeta(s)\zeta(t),\zeta(st)\in\mathbb{T}$ and $\varepsilon,\varepsilon'\in\{0,1\}$,
	it may be concluded that one of the following assertions hold:
	\begin{itemize}
		\item
		      $\varepsilon=1$ and $\varepsilon'=0$, which implies $\theta(st)=\theta(s)\theta(t)$ and $\zeta(st)=\zeta(s)\zeta(t)$;
		\item
		      $\varepsilon=0$ and $\varepsilon'=1$, which gives $\theta(st)=\theta(t)\theta(s)$ and $\zeta(st)=\zeta(s)\zeta(t)$.
	\end{itemize}

	(b) Suppose that $\theta(st)=\theta(ts)$. Then~\eqref{jh1} gives
	\[
		\zeta(s)\zeta(t)
		\bigl(\underbrace{\delta_{\theta(s)\theta(t)}(E)}_{\varepsilon}
		+\underbrace{\delta_{\theta(t)\theta(s)}(E)}_{\varepsilon'}\bigr)
		=
		\zeta(st)+\zeta(ts).
	\]
	\begin{itemize}
		\item
		      Assume towards a contradiction that $\varepsilon=0$.
		      Then $\theta(st)\neq\theta(s)\theta(t)$.
		      We evaluate~\eqref{jh1} at the set $E=\{\theta(s)\theta(t)\}$ to obtain
		      \[
			      \zeta(s)\zeta(t)\bigl(1+\underbrace{\delta_{\theta(t)\theta(s)}(E)}_{\in\{0,1\}}\bigr)=
			      \zeta(st)0+\zeta(ts)0=0,
		      \]
		      whence $\zeta(s)\zeta(t)=0$, a contradiction.
		\item
		      Assume that $\varepsilon'=0$. This gives $\theta(st)\neq\theta(t)\theta(s)$,
		      and we now evaluate~\eqref{jh1} at the set $E=\{\theta(t)\theta(s)\}$ to get
		      \[
			      \zeta(s)\zeta(t)\bigl(\underbrace{\delta_{\theta(s)\theta(t)}(E)}_{\in\{0,1\}}+1\bigr)=
			      \zeta(st)0+\zeta(ts)0=0.
		      \]
		      Thus $\zeta(s)\zeta(t)=0$, which is a contradiction.
		\item
		      Having ruled out in the previous points the possibility that $\varepsilon=0$ or $\varepsilon'=0$,
		      there is only the possibility that
		      $\varepsilon=\varepsilon'=1$.
		      The condition $\varepsilon=1$ implies that $\theta(st)=\theta(s)\theta(t)$,
		      and the condition $\varepsilon'=1$ yields $\theta(st)=\theta(t)\theta(s)$. Further, we have
		      \[
			      2\zeta(s)\zeta(t)=\zeta(st)+\zeta(ts).
		      \]
		      Since $\zeta(s)\zeta(t),\zeta(st),\zeta(ts)\in\mathbb{T}$, the preceding condition implies that
		      \[
			      \zeta(st)=\zeta(ts)=\zeta(s)\zeta(t).
		      \]
	\end{itemize}
	We have thus proved that
	$\zeta(st)=\zeta(s)\zeta(t)$ for all $s,t\in G$ and  that $\theta$ satisfies the property
	$\theta(st)\in\bigl\{\theta(s)\theta(t),\theta(t)\theta(s)\bigr\}$ for all $s,t\in G$,
	which, on account of~\cite[Theorem 2]{Sc}, implies that $\theta$ is either a homomorphism or an anti-homomorphism.
	We are now in a position to prove assertions (i) and (ii) according to the feature of the map $\theta$.

	(i)
	Suppose that $\theta$ is a homomorphism.
	Then, for all $s,t\in G$, we have
	\[
		\begin{split}
			\bar{\Phi}(\delta_s\ast\delta_t) & =
			\bar{\Phi}(\delta_{st})=
			\zeta(st)\delta_{\theta(st)} = \zeta(s)\zeta(t)\delta_{\theta(s)\theta(t)}                     \\
			                                 & = \zeta(s)\delta_{\theta(s)}\ast\zeta(t)\delta_{\theta(t)}=
			\bar{\Phi}(\delta_s)\ast\bar{\Phi}(\delta_t).
		\end{split}
	\]
	This implies that
	\[
		\bar{\Phi}(\mu\ast\nu)=\bar{\Phi}(\mu)\ast\bar{\Phi}(\nu)
		\quad\forall \mu,\nu\in
		\operatorname{lin}\{\delta_t : t\in G\},
	\]
	where
	$\operatorname{lin}\{\delta_t\colon t\in G\}$ stands for the linear span of the set $\{\delta_t\colon t\in G\}$.
	Since the convolution on both $M(G)$ and $M(H)$ is separately continuous with respect to the strict topology,
	the space $\operatorname{lin}\{\delta_t\colon t\in G\}$ is dense in $M(G)$ with respect to the strict topology, and
	the map $\bar{\Phi}$ is continuous with respect to the strict topology on both $M(G)$ and $M(H)$,
	it may be concluded that
	\[
		\bar{\Phi}(\mu\ast\nu)=\bar{\Phi}(\mu)\ast\bar{\Phi}(\nu)\quad\forall \mu,\nu\in M(G).
	\]
	Hence $\bar{\Phi}$ is a homomorphism and therefore so is $\Phi$.
	We already know that $\Phi$ is a contractive isomorphism and then~\cite[Theorem~5]{W2} gives assertion (i) of the theorem.
	It seems appropriate to mention in passing that
	\begin{equation}\label{e1939}
		\varphi=\theta^{-1},
		\quad
		\chi=\zeta\circ\theta^{-1}.
	\end{equation}
	Indeed, for each $t\in G$ and each $f\in L^1(G)$, we have
	\begin{equation}\label{e1940}
		\Phi(\delta_t\ast f)=\bar{\Phi}(\delta_t)\ast\Phi(f)=\zeta(t)\delta_{\theta(t)}\ast\Phi(f),
	\end{equation}
	and, on the other hand, we see that
	\[
		\begin{split}
			\Phi(\delta_t\ast f)(x) & = c\chi(x)(\delta_t\ast f)(\varphi(x)) =
			c\chi(x)f(t^{-1}\varphi(x))                                                                                                     \\
			                        & = \chi(\varphi^{-1}(t))\bigl[c\chi(\varphi^{-1}(t)^{-1}x)(f\circ\varphi)(\varphi^{-1}(t)^{-1}x)\bigr] \\
			                        & = \chi(\varphi^{-1}(t))\Phi(f)(\varphi^{-1}(t)^{-1}x)=
			\chi(\varphi^{-1}(t))\bigl(\delta_{\varphi^{-1}(t)}\ast\Phi(f)\bigr)(x)
		\end{split}
	\]
	for almost each $x\in H$,
	so that
	\begin{equation}\label{e1941}
		\Phi(\delta_t\ast f)=\chi(\varphi^{-1}(t))\delta_{\varphi^{-1}(t)}\ast\Phi(f).
	\end{equation}
	From~\eqref{e1940} and~\eqref{e1941} we deduce that $\zeta(t)\delta_{\theta(t)}=\chi(\varphi^{-1}(t))\delta_{\varphi^{-1}(t)}$
	for each $t\in G$, and~\eqref{e1939} is proved.

	(ii) Suppose that $\theta$ is an anti-homomorphism.
	Then, for all $s,t\in G$, we have
	\[
		\begin{split}
			\bar{\Phi}(\delta_s\ast\delta_t) & =
			\bar{\Phi}(\delta_{st})=
			\zeta(st)\delta_{\theta(st)}=
			\zeta(s)\zeta(t)\delta_{\theta(t)\theta(s)} \\
			                                 & =
			\zeta(t)\delta_{\theta(t)}\ast\zeta(s)\delta_{\theta(s)}=
			\bar{\Phi}(\delta_t)\ast\bar{\Phi}(\delta_s),
		\end{split}
	\]
	and, using the same arguments as in (i), we arrive at
	\[
		\bar{\Phi}(\mu\ast\nu)=\bar{\Phi}(\nu)\ast\bar{\Phi}(\mu) \quad\forall \mu,\nu\in M(G).
	\]
	Thus $\bar{\Phi}$ is an anti-homomorphism and so is $\Phi$.
	We now consider the isometric anti-automorphism $\Psi$ of $L^1(H)$ defined by
	\[
		\bigl(\Psi g\bigr)(t)=g(t^{-1})\Delta_H(t^{-1})\quad\forall g\in L^1(H), \ \forall t\in H.
	\]
	The composition $\Psi\Phi$ is a contractive isomorphism, and~\cite[Theorem~5]{W2} shows that
	it is an isometry (and hence so is $\Phi$) and it is actually expressible as
	\begin{equation}\label{e2001}
		(\Psi\Phi)f=c\chi'(f\circ\varphi')
	\end{equation}
	for each $f\in L^1(G)$, where
	$\varphi'\colon H\to G$ is a homeomorphic group isomorphism,
	$\chi'\colon H\to\mathbb{T}$ is a continuous group homomorphism, and
	$c$ is the constant value of the ratio $\lambda_G(\varphi'(E))/\lambda_H(E)$
	for each measurable set $E\subseteq H$ with finite non-zero measure.
	From~\eqref{e2001} we deduce that
	\begin{equation*}
		\Phi f(t)=c\chi'(t^{-1})f\bigl(\varphi'(t^{-1})\bigr)\Delta_H(t^{-1})
	\end{equation*}
	for each $f\in L^1(G)$ and almost all $t\in H$,
	which establishes assertion (ii) of the theorem with $\varphi\colon H\to G$ and $\chi\colon H\to\mathbb{T}$ defined by
	\[
		\chi(t)=\chi'(t^{-1}), \quad \varphi(t)=\varphi'(t^{-1})\quad\forall t\in H.
	\]
	Perhaps it is appropriate at this point to note that
	\begin{equation}\label{e2012}
		\varphi=\theta^{-1},
		\quad
		\chi=\zeta\circ\theta^{-1}.
	\end{equation}
	In order to get these equations, we observe that
	for each $t\in G$ and each $f\in L^1(G)$,
	\begin{equation}\label{e2013}
		\Phi(\delta_t\ast f)=\Phi(f)\ast\bar{\Phi}(\delta_t)=\zeta(t)\Phi(f)\ast\delta_{\theta(t)}
	\end{equation}
	and that
	\begin{multline*}
		\Phi(\delta_t\ast f)(x)  = c\chi(x)(\delta_t\ast f)(\varphi(x))\Delta_H(x^{-1})=
		c\chi(x)f(t^{-1}\varphi(x))\Delta_H(x^{-1}) \\
		= \chi(\varphi^{-1}(t))\bigl[c\chi(f\circ\varphi)\bigr](x\varphi^{-1}(t)^{-1})
		\Delta_H\bigl((x\varphi^{-1}(t)^{-1})^{-1}\bigr)\Delta_H(\varphi^{-1}(t)^{-1})
		\\
		= \chi(\varphi^{-1}(t))\Phi(f)(x\varphi^{-1}(t)^{-1})\Delta_H(\varphi^{-1}(t)^{-1}) \\
		= \chi(\varphi^{-1}(t))\bigl(\Phi(f)\ast\delta_{\varphi^{-1}(t)}\bigr)(x)
	\end{multline*}
	for almost each $x\in H$,
	so that
	\begin{equation}\label{e2014}
		\Phi(\delta_t\ast f)=\chi(\varphi^{-1}(t))\Phi(f)\ast\delta_{\varphi^{-1}(t)}.
	\end{equation}
	From~\eqref{e2013} and~\eqref{e2014} we obtain $\zeta(t)\delta_{\theta(t)}=\chi(\varphi^{-1}(t))\delta_{\varphi^{-1}(t)}$
	for each $t\in G$, which establishes~\eqref{e2012}.
\end{proof}

\section{Isometric two-sided zero product preservers}

Let $\mathcal{A}$ and $\mathcal{B}$ be Banach algebras.
We will say that a linear map $\Phi\colon\mathcal{A}\to\mathcal{B}$ is a
\emph{two-sided zero product preserver} if for all $a,b\in\mathcal{A}$
\[
	ab=ba=0 \ \implies \ \Phi(a)\Phi(b)=\Phi(b)\Phi(a)=0.
\]
The question of describing the surjective two-sided zero product preservers is addressed in
\cite{ABEV, AEV, BGV} if either $\mathcal{A}$ and $\mathcal{B}$ are $C^*$-algebras or
if $\mathcal{A}$ and $\mathcal{B}$ are group algebras.
Here we will remain in the context of group algebras, but
we now turn our attention to the case where the preservers are isometric.

\begin{theorem}
	Let $G$ and $H$ be locally compact groups, and
	let $\Phi\colon L^1(G)\to L^1(H)$ be a surjective
	isometric
	two-sided zero product preserver.
	Then $\Phi$ is expressible as
	\[
		\Phi f=\alpha \delta_x\ast\Psi f
	\]
	for each $f\in L^1(G)$, where
	\begin{itemize}
		\item
		      $\alpha\in\mathbb{T}$,
		\item
		      $x$ is an element in the centre of $H$,
		\item
		      $\Psi\colon L^1(G)\to L^1(H)$  is
		      either an isometric isomorphism or an isometric anti-isomorphism.
	\end{itemize}
	In particular, $G$ and $H$ are isomorphic as topological groups.
\end{theorem}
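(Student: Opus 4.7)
The strategy is to reduce the theorem to Theorem~\ref{mainTheo} by writing $\Phi$ as the convolution of a central isometric measure with a contractive Jordan isomorphism. Observe first that $\Phi$ is bijective: injectivity follows from $\Phi$ being an isometry, and surjectivity is hypothesised.

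The heart of the argument is the extraction of a hidden Jordan isomorphism from $\Phi$. Following the treatment of two-sided zero product preservers in \cite{ABEV,AEV,BGV}, I would exploit the orthogonality $\Phi(f)\ast\Phi(g)=\Phi(g)\ast\Phi(f)=0$ that holds whenever $f\ast g=g\ast f=0$. Convolving against an approximate identity for $L^1(G)$ and lifting the resulting identities to $M(H)$ via the strict-topology extension of Lemma~\ref{1738} should yield an invertible central element $u\in \mathcal{Z}(M(H))$ satisfying
\[
	\Phi(f\circ g)=u\ast\bigl(\Phi(f)\circ\Phi(g)\bigr)\quad\forall f,g\in L^1(G).
\]
A short calculation, using that $u$ is central, then shows that $\Psi:=u\ast\Phi$ is a Jordan homomorphism; combined with the bijectivity of $\Phi$ and the invertibility of $u$, it is a Jordan isomorphism from $L^1(G)$ onto $L^1(H)$.

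Next I would apply Theorem~\ref{mainTheo}. The isometry of $\Phi$ controls the norm of $\Psi$, and an appropriate normalisation ensures that $\Psi$ is contractive. Theorem~\ref{mainTheo} then declares $\Psi$ to be either an isometric isomorphism or an isometric anti-isomorphism, and in particular an isometry. The relation $\Phi=u^{-1}\ast\Psi$ combined with the isometry of both $\Phi$ and $\Psi$ forces $\mu\mapsto u^{-1}\ast\mu$ to be an isometric bijection of $L^1(H)$ onto itself; by Wendel's theorem \cite[Theorem~1]{W2}, $u^{-1}=\alpha\delta_x$ for some $\alpha\in\mathbb{T}$ and $x\in H$. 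Centrality of $u^{-1}$ in $M(H)$ is equivalent to $\delta_x\ast\delta_t=\delta_t\ast\delta_x$ for every $t\in H$, i.e.\ to $x\in\mathcal{Z}(H)$. The decomposition $\Phi f=\alpha\delta_x\ast\Psi f$ is immediate, and the topological-group isomorphism $G\cong H$ is part of the conclusion of Theorem~\ref{mainTheo}.

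The main obstacle will be the first step, i.e.\ extracting a central multiplier $u$ from the mere orthogonality hypothesis. Unlike a full Jordan homomorphism, a two-sided zero product preserver only provides annihilation on orthogonal pairs, and one must carefully assemble the full Jordan-type identity with a central coefficient through an approximate-identity argument combined with the strict-topology machinery of Section~2. The centrality of $u$ relies crucially on the \emph{two-sided} nature of the hypothesis; a one-sided zero product preserver would produce only a possibly non-central multiplier and a correspondingly different decomposition.
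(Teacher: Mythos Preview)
Your outline is essentially the paper's own proof. The paper cites \cite[Corollary~2.7]{BGV} directly to obtain the decomposition $\Phi f=\mu\ast\Psi f$ with $\mu\in\mathcal{Z}(M(H))$ invertible and $\Psi$ a surjective Jordan homomorphism (your $u$ is the paper's $\mu^{-1}$); there is no need to invoke Lemma~\ref{1738} or redo the approximate-identity extraction, since that work is already packaged in \cite{BGV}. After that, the paper proceeds exactly as you describe: contractivity of $\Psi$, Theorem~\ref{mainTheo}, then Wendel's theorem for $\mu$.

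One point that you should sharpen: the contractivity of $\Psi$ is \emph{not} a matter of ``appropriate normalisation''. Once the BGV decomposition is fixed, $\Psi$ is determined, and you must \emph{prove} $\|\Psi f\|_1\le\|f\|_1$. The paper's argument is short but specific: using that $\mu$ is central and $\Psi$ is Jordan, one computes
\[
\Phi\bigl(f\circ\Phi^{-1}g\bigr)=\mu\ast\bigl(\Psi f\circ(\mu^{-1}\ast g)\bigr)=\Psi f\circ g,
\]
so $\|\Psi f\circ g\|_1=\|f\circ\Phi^{-1}g\|_1\le\|f\|_1\|g\|_1$ by the isometry of $\Phi$; taking $g=e_\gamma$ a bounded approximate identity for $L^1(H)$ gives $\|\Psi f\|_1\le\|f\|_1$. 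Your phrase ``the isometry of $\Phi$ controls the norm of $\Psi$'' points in the right direction, but the mechanism is this identity, not a rescaling.
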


\begin{proof}
	The proof starts by applying~\cite[Corollary~2.7]{BGV},
	which (regardless of the isometric character of $\Phi$) shows that
	there exist a surjective continuous Jordan homomorphism $\Psi\colon L^1(G)\to L^1(H)$ and
	an invertible central measure $\mu\in M(H)$ such that
	\[
		\Phi f=\mu\ast\Psi f\quad\forall f\in L^1(G).
	\]
	Since $\Phi$ is injective, it follows that $\Psi$ is injective, and hence $\Psi$ is a Jordan isomorphism.

	Our next goal is to show that $\Psi$ is contractive.
	For each $f\in L^1(G)$ and each $g\in L^1(H)$, we have (using that $\mu$ is central)
	\[
		\begin{split}
			\Phi(f\circ\Phi^{-1}g) & =
			\mu\ast\Psi(f\circ\Phi^{-1}g)=
			\mu\ast(\Psi f\circ(\Psi\Phi^{-1}g))                             \\
			                       & = \mu\ast(\Psi f\circ(\mu^{-1}\ast g))=
			\Psi f\circ g,
		\end{split}
	\]
	and hence
	\begin{equation}\label{e1425}
		\begin{split}
			\left\Vert\Psi f\circ g\right\Vert_1
			 & = \left\Vert\Phi(f\circ\Phi^{-1}g)\right\Vert_1 =
			\left\Vert f\circ\Phi^{-1}g\right\Vert_1             \\
			 & \le
			\left\Vert f\right\Vert_1 \left\Vert\Phi^{-1} g\right\Vert_1=
			\left\Vert f \right\Vert_1 \left\Vert g\right\Vert_1.
		\end{split}
	\end{equation}
	Let $(e_\gamma)_{\gamma\in\Gamma}$ be an approximate identity for $L^1(H)$ of bound $1$.
	For each $f\in L^1(G)$, from \eqref{e1425} we see that
	\[
		\left\Vert\Psi f\circ e_\gamma \right\Vert_1 \le \left\Vert f\right\Vert_1 \quad \forall \gamma\in\Gamma,
	\]
	and taking the limit in $\gamma\in\Gamma$ we arrive at
	\[
		\left\Vert \Psi f\right\Vert_1 =
		\lim \left\Vert\Psi f\circ e_\gamma\right\Vert_1 \le \left\Vert f\right\Vert_1.
	\]
	This proves that $\Psi$ is contractive.

	We now apply Theorem~\ref{mainTheo} to conclude that $\Psi$ is either an isometric isomorphism or an isometric anti-isomorphism.

	Finally, to deal with the measure $\mu$, we note that
	\[
		\mu\ast g=\Phi\Psi^{-1}g\quad\forall g\in L^1(H),
	\]
	which gives
	\begin{equation*}
		\left\Vert\mu\ast g\right\Vert_1 = \left\Vert\Phi\Psi^{-1}g\right\Vert_1 = \left\Vert g\right\Vert_1 \quad \forall g\in L^1(H),
	\end{equation*}
	and we conclude from~\cite[Theorem 3]{W2} that $\mu=\alpha\delta_x$
	for some $\alpha\in\mathbb{C}$ with $\vert\alpha\vert=1$ and some $x\in H$. Since $\mu$ lies in the centre of $M(H)$,
	we conclude that $x$ belongs to the centre of $H$.
\end{proof}

\section{Local isometric automorphisms}

Let $\mathcal{A}$ be a Banach algebra. A linear map $\Phi\colon\mathcal{A}\to\mathcal{A}$ is called
\begin{itemize}
	\item
	      a \emph{local isometric automorphism} if for each $a\in\mathcal{A}$ there exists an isometric automorphism
	      $\Phi_a$ of $\mathcal{A}$ such that $\Phi(a)=\Phi_a(a)$,
	\item
	      an \emph{approximately local isometric automorphism} if for each $a\in\mathcal{A}$ there exists a sequence
	      $(\Phi_{a,n})$ of isometric automorphisms of $\mathcal{A}$ such that $\Phi(a)=\lim\Phi_{a,n}(a)$.
\end{itemize}

\begin{lemma}\label{mainlemma}
	Let $G$ be a unimodular locally compact group, and
	let $\Phi\colon L^1(G)\to L^1(G)$ be a surjective Jordan homomorphism.
	Suppose that $\Phi$ is an approximately local isometric automorphism.
	Then $\Phi$ is an isometric automorphism.
\end{lemma}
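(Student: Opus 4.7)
The plan is to combine the approximately local hypothesis with Theorem~\ref{mainTheo} to reduce to two cases, and then to show that the anti-isomorphism case forces $G$ to be abelian, whereupon it coincides with the isomorphism case and $\Phi$ is an isometric automorphism. First, I observe that $\Phi$ is automatically isometric: for each $f\in L^1(G)$,
\[
	\|\Phi(f)\|_1 = \lim_n \|\Phi_{f,n}(f)\|_1 = \|f\|_1,
\]
each $\Phi_{f,n}$ being an isometric automorphism. Being surjective, injective (as an isometry), and a Jordan homomorphism, $\Phi$ is a contractive Jordan isomorphism, so Theorem~\ref{mainTheo} yields two possibilities: either $\Phi$ is an isometric isomorphism (in which case it is already an isometric automorphism and we are done), or $\Phi$ is an isometric anti-isomorphism. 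In the latter case, since $\Delta_G\equiv 1$, Theorem~\ref{mainTheo} provides $\Phi f(t) = c\chi(t) f(\varphi(t))$ for a homeomorphic group anti-isomorphism $\varphi\colon G\to G$, and the extension $\bar\Phi$ given by Lemma~\ref{1738} satisfies $\bar\Phi(\delta_t) = \zeta(t)\delta_{\theta(t)}$ with $\theta=\varphi^{-1}$ a continuous anti-automorphism of $G$.

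To show $G$ must be abelian in the anti-isomorphism case, I would argue by contradiction. Assuming $r,s\in G$ with $rs\neq sr$, the idea is to test approximate locality on
\[
	f_U = \delta_r\ast e_U + 2\,\delta_s\ast e_U + 4\,\delta_{rs}\ast e_U,
\]
where $e_U=\mathbf{1}_U/\lambda_G(U)$ and $U$ is a symmetric neighborhood of $e$ chosen small enough (using continuity of $\theta$) that the sets $\theta(U)\theta(r)$, $\theta(U)\theta(s)$, and $\theta(U)\theta(s)\theta(r)$ are pairwise disjoint. A direct computation using the anti-multiplicativity of $\bar\Phi$ shows that $\Phi(f_U)$ splits as three disjointly-supported pieces of $L^1$-norms $1,2,4$ living in those three sets. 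For any isometric automorphism $\Psi$ with associated group automorphism $\theta_\Psi$, the multiplicativity of $\bar\Psi$ instead yields three pieces of the same $L^1$-norms supported in $\theta_\Psi(r)\theta_\Psi(U)$, $\theta_\Psi(s)\theta_\Psi(U)$, and $\theta_\Psi(r)\theta_\Psi(s)\theta_\Psi(U)$. Because the three masses $1,2,4$ admit no non-trivial partition, an $L^1$-approximation $\|\Phi(f_U)-\Psi(f_U)\|_1<\varepsilon$ forces the pairing of the $i$-th piece of $\Phi(f_U)$ with the $i$-th piece of $\Psi(f_U)$, whence $\theta_\Psi(r)$, $\theta_\Psi(s)$, and $\theta_\Psi(r)\theta_\Psi(s)$ must lie close to $\theta(r)$, $\theta(s)$, and $\theta(s)\theta(r)$, respectively. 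Applying approximate locality with $U$ shrinking to $\{e\}$ and $\varepsilon\to 0$, continuity of multiplication in $G$ gives $\theta(r)\theta(s)=\theta(s)\theta(r)$, contradicting $rs\neq sr$.

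The main obstacle is making this support-matching argument quantitative. Two $L^1$-functions of equal norm $a$ with disjoint supports differ in $L^1$-norm by at least $2a$ minus twice the mass of any overlap, so an $\varepsilon$-approximation of $\Phi(f_U)$ by $\Psi(f_U)$ truly does force each $\theta_\Psi(t_i)$ to cluster inside the narrow neighborhood $\theta(U)\theta(t_i)$ up to small defect; rendering this precise and then extracting a genuine limit in $G$ so the approximate equalities collapse to exact ones is the delicate part. Once $G$ is shown to be abelian, anti-automorphisms of $G$ coincide with automorphisms, so $\Phi$ is in fact an isometric isomorphism, and hence an isometric automorphism.
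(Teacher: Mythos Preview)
Your overall strategy matches the paper's: use approximate locality to see that $\Phi$ is an isometry, invoke Theorem~\ref{mainTheo}, and in the anti-isomorphism case derive a contradiction from non-commutativity by testing on a function built from three disjoint translates with distinguishable weights. The reduction and the idea of weight-matching are exactly right.

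The genuine gap is in the step where you pass from ``the mass-$1$ piece of $\Psi(f_U)$ is $L^1$-close to the mass-$1$ piece of $\Phi(f_U)$'' to ``$\theta_\Psi(r)$ lies close to $\theta(r)$ in $G$''. The first assertion only says that the supports $\theta_\Psi(r)\theta_\Psi(U)$ and $\theta(U)\theta(r)$ share most of their mass; it says nothing about the basepoint $\theta_\Psi(r)$ unless you already know $\theta_\Psi(U)$ is small in $G$. But the approximating automorphisms $\theta_\Psi$ change with $U$ (and with $n$), and there is no uniform control on how badly they can distort neighbourhoods of $e$: a set of small Haar measure need not have small diameter. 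When you then try to combine the three overlap relations, every equation you write still carries uncontrolled factors of the form $\theta_\Psi(u)$ with $u\in U$, and these do not cancel. So the limiting argument you sketch (``continuity of multiplication gives $\theta(r)\theta(s)=\theta(s)\theta(r)$'') cannot be closed as stated.

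The paper resolves this by a device you have not hit upon. It uses a \emph{single} test function $g=\mathbf{1}_{sU}+2\mathbf{1}_{tU}+3\mathbf{1}_{stV}$ (no shrinking of $U$), extracts one sequence $(\Phi_n)$ from approximate locality, and then for each small $W\subseteq U$ works with the preimage sets $\varphi_n^{-1}(sW)$, $\varphi_n^{-1}(tW)$, $\varphi_n^{-1}(stV)$. The weight-matching forces, via careful measure estimates, that
\[
\bigl(\varphi_n^{-1}(tW)\cap\varphi^{-1}(tU)\bigr)\bigl(\varphi_n^{-1}(sW)\cap\varphi^{-1}(sU)\bigr)\cap\varphi_n^{-1}(stV)\neq\emptyset
\]
for large $n$. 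Picking a point there and writing it as $\varphi_n^{-1}(ty_W)\varphi_n^{-1}(sx_W)$ with $x_W,y_W\in W$, the homomorphism property of $\varphi_n$ collapses this to $\varphi_n^{-1}(ty_Wsx_W)\in\varphi_n^{-1}(stV)$, i.e.\ $ty_Wsx_W\in stV$, a relation in $G$ from which $\varphi_n$ has disappeared entirely. Letting $W\downarrow\{e\}$ then gives $ts\in stV$, the desired contradiction. This cancellation of the varying $\varphi_n$ is the missing idea in your sketch.
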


\begin{proof}
	We begin by proving that $\Phi$ is an isometry. Let $f\in L^1(G)$ and take a sequence $(\Phi_{f,n})$ of
	isometric automorphisms of $L^1(G)$ such that $\Phi(f)=\lim\Phi_{f,n}(f)$. Then
	\[
		\left\Vert\Phi(f)\right\Vert_1 = \lim \left\Vert\Phi_{f,n}(f)\right\Vert_1 = \left\Vert f\right\Vert_1.
	\]
	Since, by hypothesis, $\Phi$ is surjective, we conclude that $\Phi$ is an isometric Jordan automorphism.

	Theorem~\ref{mainTheo} shows that $\Phi$ is either an isometric automorphism, as claimed, or an isometric anti-automorphism. In this latter case, we will show that $G$ is abelian, which clearly implies
	that $\Phi$ is also an automorphism.
	From now on we assume that $\Phi$ is an anti-automorphism, so that it is actually expressible as
	\[
		\Phi(f)=c\chi (f\circ\varphi)
		\quad
		\forall f\in L^1(G),
	\]
	where
	\begin{itemize}
		\item
		      $\varphi\colon G\to G$ is a homeomorphic group anti-automorphism,
		\item
		      $\chi\colon G\to\mathbb{T}$ is a continuous group homomorphism,
		\item
		      $c$ is the constant value of the ratio $\lambda_G(\varphi(E^{-1}))/\lambda_G(E)$ for each measurable set $E\subseteq G$
		      with finite non-zero measure.
	\end{itemize}
	It should be pointed out that the modular function has been omitted in the above representation
	because $G$ is unimodular.
	Our goal is to show that $G$ is abelian.

	Assume towards a contradiction that there exist $s,t\in G$ such that $st\neq ts$.
	Then the elements $s,t,st,ts$ are mutually distinct and hence there exists
	a compact neighbourhood $V$ of the identity $e$ of $G$ such that
	the sets
	\begin{equation}\label{eq13}
		sV,tV,(st)V,(ts)V
		\text{ are mutually disjoint.}
	\end{equation}
	Further, we take a compact neighbourhood $U$ of $e$ such tat
	\begin{equation*}
		U\subseteq V \text{ and } (sU)(tU)\subseteq (st) V.
	\end{equation*}
	Consequently, the sets
	\begin{equation*}
		sU, tU, stV, tsV
		\text{ are mutually disjoint.}
	\end{equation*}
	We can define $g\in L^1(G)$ by
	\[
		g=\mathbf{1}_{sU}+2\mathbf{1}_{tU}+3\mathbf{1}_{stV},
	\]
	where $\mathbf{1}_{E}$ denotes the characteristic function of $E$
	for each $E\subseteq G$.

	It follows from the hypothesis that there exists
	a sequence $(\Phi_{n})$ of isometric automorphisms of $L^1(G)$ such that
	\begin{equation*}
		\Phi(g)=\lim_{n\to\infty}\Phi_n(g).
	\end{equation*}
	Of course, for each $n\in\mathbb{N}$, $\Phi_n$ is expressible as
	\[
		\Phi_n(f)=c_n\chi_n (f\circ\varphi_n)
		\quad
		\forall f\in L^1(G),
	\]
	where
	\begin{itemize}
		\item
		      $\varphi_n\colon G\to G$ is a homeomorphic group automorphism,
		\item
		      $\chi_n\colon G\to\mathbb{T}$ is a continuous group homomorphism,
		\item
		      $c_n$ is the constant value of the ratio $\lambda_G(\varphi_n(E))/\lambda_G(E)$ for each measurable set $E\subseteq G$ with finite non-zero measure.
	\end{itemize}
	We thus get
	\begin{equation*}
		\lim_{n\to\infty} \left\Vert c\chi(g\circ\varphi)-c_n\chi_n(g\circ\varphi_n)\right\Vert_1 = 0,
	\end{equation*}
	and, since
	\[
		\bigl\Vert \vert c\chi(g\circ\varphi)\vert-\vert c_n\chi_n(g\circ\varphi_n)\vert\bigr\Vert_1
		\le
		\left\Vert c\chi(g\circ\varphi)-c_n\chi_n(g\circ\varphi_n) \right\Vert_1
		\quad\forall n\in\mathbb{N},
	\]
	we deduce that
	\begin{equation}\label{eq14}
		\lim_{n\to\infty} \left\Vert c g\circ\varphi-c_n g\circ\varphi_n\right\Vert_1 = 0.
	\end{equation}
	By passing to a subsequence, we may suppose that
	\begin{equation}\label{eq14b}
		\lim_{n\to\infty}\bigl( c g\circ\varphi-c_n g\circ\varphi_n\bigr)=0
		\quad\text{almost everywhere on $G$.}
	\end{equation}
	We proceed to show that
	\begin{equation}\label{eq14c}
		\lim_{n\to\infty}c_n=c.
	\end{equation}
	Note that
	\begin{equation}\label{eq1750}
		\begin{split}
			g\circ\varphi   & =
			\mathbf{1}_{\varphi^{-1}(sU)}+2\mathbf{1}_{\varphi^{-1}(tU)}+3\mathbf{1}_{\varphi^{-1}(stV)}, \\
			g\circ\varphi_n & =
			\mathbf{1}_{\varphi_n^{-1}(sU)}+2\mathbf{1}_{\varphi_n^{-1}(tU)}+3\mathbf{1}_{\varphi_n^{-1}(stV)}
		\end{split}
	\end{equation}
	and, since $\lambda_G\bigl(\varphi^{-1}(sU)\bigr),\lambda_G\bigl(\varphi^{-1}(tU)\bigr)\ne 0$,
	on account of~\eqref{eq14b} we can take $x\in \varphi^{-1}(sU)$ and $y\in\varphi^{-1}(tU)$
	such that
	\begin{multline}\label{eq14d}
		\lim_{n\to\infty}
		c_n\bigl(\mathbf{1}_{\varphi_n^{-1}(sU)}(x)+2\mathbf{1}_{\varphi_n^{-1}(tU)}(x)+3\mathbf{1}_{\varphi_n^{-1}(stV)}(x)\bigr)\\
		= c\bigl(\mathbf{1}_{\varphi^{-1}(sU)}(x)+2\mathbf{1}_{\varphi^{-1}(tU)}(x)+3\mathbf{1}_{\varphi^{-1}(stV)}(x)\bigr)
	\end{multline}
	and
	\begin{multline}\label{eq14e}
		\lim_{n\to\infty}
		c_n\bigl(\mathbf{1}_{\varphi_n^{-1}(sU)}(y)+2\mathbf{1}_{\varphi_n^{-1}(tU)}(y)+3\mathbf{1}_{\varphi_n^{-1}(stV)}(y)\bigr)\\
		= c\bigl(\mathbf{1}_{\varphi^{-1}(sU)}(y)+2\mathbf{1}_{\varphi^{-1}(tU)}(y)+3\mathbf{1}_{\varphi^{-1}(stV)}(y)\bigr).
	\end{multline}
	By~\eqref{eq13},
	\begin{gather*}
		p_n:=
		\mathbf{1}_{\varphi_n^{-1}(sU)}(x)+2\mathbf{1}_{\varphi_n^{-1}(tU)}(x)+3\mathbf{1}_{\varphi_n^{-1}(stV)}(x)\in\{0,1,2,3\}, \\
		q_n:=
		\mathbf{1}_{\varphi_n^{-1}(sU)}(y)+2\mathbf{1}_{\varphi_n^{-1}(tU)}(y)+3\mathbf{1}_{\varphi_n^{-1}(stV)}(y)\in\{0,1,2,3\}, \\
		\mathbf{1}_{\varphi^{-1}(sU)}(x)+2\mathbf{1}_{\varphi^{-1}(tU)}(x)+3\mathbf{1}_{\varphi^{-1}(stV)}(x)
		=1, \\
		\shortintertext{and}
		\mathbf{1}_{\varphi^{-1}(sU)}(y)+2\mathbf{1}_{\varphi^{-1}(tU)}(y)+3\mathbf{1}_{\varphi^{-1}(stV)}(y)=
		2,
	\end{gather*}
	and, by~\eqref{eq14d},~\eqref{eq14e}, we have
	\begin{equation}\label{eq14f}
		\lim_{n\to\infty}c_np_n=c,
		\quad
		\lim_{n\to\infty}c_nq_n=2c.
	\end{equation}
	This clearly forces that  $p_n,q_n\ne 0$ for sufficiently large $n\in\mathbb{N}$
	and
	\[
		\lim_{n\to\infty}\frac{p_n}{q_n}=\frac{1}{2},
	\]
	which, in turns, implies that
	$
		\frac{p_n}{q_n}=\frac{1}{2}
	$
	for sufficiently large $n\in\mathbb{N}$,
	and hence that $p_n=1$ and $q_n=2$
	for sufficiently large $n\in\mathbb{N}$.
	Now~\eqref{eq14f} yields~\eqref{eq14c}.

	From~\eqref{eq14} and~\eqref{eq14c} we see that
	\begin{equation}\label{1754}
		\lim_{n\to\infty} \left\Vert g\circ\varphi-g\circ\varphi_n\right\Vert_1=0.
	\end{equation}

	Let $W$ be a neighbourhood of $e$ such that $W\subseteq U$.
	We claim that
	\begin{align}
		\lim_{n\to\infty}
		\lambda_G\bigl(\varphi_n^{-1}(sW)\cap\varphi^{-1}(sU)\bigr) & =
		c^{-1}\lambda_G(W), \label{eq2100}                                                                   \\
		\lim_{n\to\infty}
		\lambda_G\bigl(\varphi_n^{-1}(tW)\cap\varphi^{-1}(tU)\bigr) & =
		c^{-1}\lambda_G(W), \label{eq2200}                                                                   \\
		\shortintertext{and}
		\liminf_{n\to\infty}
		\lambda_G\bigl(E_n\cap\varphi_n^{-1}(stV)\bigr)             & \ge c^{-1}\lambda_G(W), \label{eq2300}
	\end{align}
	where
	\[
		E_n=\bigl(\varphi_n^{-1}(tW)\cap\varphi^{-1}(tU)\bigr)\bigl(\varphi_n^{-1}(sW)\cap\varphi^{-1}(sU)\bigr)
		\quad\forall n\in\mathbb{N}.
	\]
	In order to prove~\eqref{eq2100}, write
	\[
		\begin{split}
			A_n & =\varphi_n^{-1}(sW)\cap\varphi^{-1}(sU),                                                                           \\
			B_n & =\varphi_n^{-1}(sW)\cap\varphi^{-1}(tU),                                                                           \\
			C_n & =\varphi_n^{-1}(sW)\cap\varphi^{-1}(stV),                                                                          \\
			D_n & =\varphi_n^{-1}(sW)\cap\bigr[G\setminus\bigl(\varphi^{-1}(sU)\cup\varphi^{-1}(tU)\cup\varphi^{-1}(stV)\bigr)\bigr]
		\end{split}
	\]
	for each $n\in\mathbb{N}$, and note that
	\[
		\varphi_n^{-1}(sW)=A_n\cup B_n\cup C_n\cup D_n.
	\]
	From~\eqref{eq1750} we deduce that
	\[
		\begin{split}
			\mathbf{1}_{\varphi_n^{-1}(sW)}(g\circ\varphi-g\circ\varphi_n) & =
			\mathbf{1}_{A_n}+2\mathbf{1}_{B_n}+3\mathbf{1}_{C_n}-\mathbf{1}_{\varphi_n^{-1}(sW)}                                                  \\
			                                                               & =
			\mathbf{1}_{A_n}+2\mathbf{1}_{B_n}+3\mathbf{1}_{C_n} -\bigl(\mathbf{1}_{A_n}+\mathbf{1}_{B_n}+\mathbf{1}_{C_n}+\mathbf{1}_{D_n}\bigr) \\
			                                                               & = \mathbf{1}_{B_n}+2\mathbf{1}_{C_n}-\mathbf{1}_{D_n},
		\end{split}
	\]
	whence
	\[
		\bigl\Vert\mathbf{1}_{\varphi_n^{-1}(sW)}(g\circ\varphi-g\circ\varphi_n)\bigr\Vert_1=
		\lambda_G(B_n)+2\lambda_G(C_n)+\lambda_G(D_n).
	\]
	Since
	\[
		\bigl\Vert\mathbf{1}_{\varphi_n^{-1}(sW)}(g\circ\varphi-g\circ\varphi_n)\bigr\Vert_1\le
		\bigl\Vert g\circ\varphi-g\circ\varphi_n\bigr\Vert_1,
	\]
	\eqref{1754} then gives
	\[
		\lim_{n\to\infty}\bigl(\lambda_G(B_n)+2\lambda_G(C_n)+\lambda_G(D_n)\bigr)=0,
	\]
	so that
	\[
		\lim_{n\to\infty}\lambda_G(B_n)=
		\lim_{n\to\infty}\lambda_G(C_n)=
		\lim_{n\to\infty}\lambda_G(D_n)=0.
	\]
	We now see that
	\[
		\lambda_G(\varphi_n^{-1}(sW))=\lambda_G(A_n)+
		\underbrace{\lambda_G(B_n)+\lambda_G(C_n)+\lambda_G(D_n)}_{\to 0}
	\]
	and
	\[
		\lambda_G(\varphi_n^{-1}(sW))=c_n^{-1}\lambda_G(sW)=c_n^{-1}\lambda_G(W)\to c^{-1}\lambda_G(W),
	\]
	and we thus obtain~\eqref{eq2100}.
	We can apply the same arguments as before, with $\varphi_n^{-1}(sW)$ replaced by $\varphi_n^{-1}(tW)$,
	to obtain~\eqref{eq2200}.
	We take
	\[
		\begin{split}
			A_n & =\varphi_n^{-1}(tW)\cap\varphi^{-1}(sU),                                                                           \\
			B_n & =\varphi_n^{-1}(tW)\cap\varphi^{-1}(tU),                                                                           \\
			C_n & =\varphi_n^{-1}(tW)\cap\varphi^{-1}(stV),                                                                          \\
			D_n & =\varphi_n^{-1}(tW)\cap\bigr[G\setminus\bigl(\varphi^{-1}(sU)\cup\varphi^{-1}(tU)\cup\varphi^{-1}(stV)\bigr)\bigr]
		\end{split}
	\]
	for each $n\in\mathbb{N}$, and note that
	\[
		\varphi_n^{-1}(tW)=A_n\cup B_n\cup C_n\cup D_n.
	\]
	From~\eqref{eq1750} we see that
	\[
		\begin{split}
			\mathbf{1}_{\varphi_n^{-1}(tW)}(g\circ\varphi-g\circ\varphi_n) & =
			\mathbf{1}_{A_n}+2\mathbf{1}_{B_n}+3\mathbf{1}_{C_n}-2\mathbf{1}_{\varphi_n^{-1}(sW)}                                                            \\
			                                                               & =
			\mathbf{1}_{A_n}+2\mathbf{1}_{B_n}+3\mathbf{1}_{C_n}  \quad {} -2\bigl(\mathbf{1}_{A_n}+\mathbf{1}_{B_n}+\mathbf{1}_{C_n}+\mathbf{1}_{D_n}\bigr) \\
			                                                               & =
			-\mathbf{1}_{A_n}+\mathbf{1}_{C_n}-2\mathbf{1}_{D_n},
		\end{split}
	\]
	and therefore
	\[
		\bigl\Vert\mathbf{1}_{\varphi_n^{-1}(tW)}(g\circ\varphi-g\circ\varphi_n)\bigr\Vert_1=
		\lambda_G(A_n)+\lambda_G(C_n)+2\lambda_G(D_n).
	\]
	By using that
	\[
		\bigl\Vert\mathbf{1}_{\varphi_n^{-1}(sW)}(g\circ\varphi-g\circ\varphi_n)\bigr\Vert_1\le
		\bigl\Vert g\circ\varphi-g\circ\varphi_n\bigr\Vert_1
	\]
	and
	\eqref{1754} we obtain
	\[
		\lim_{n\to\infty}\bigl(\lambda_G(A_n)+\lambda_G(C_n)+2\lambda_G(D_n)\bigr)=0,
	\]
	so that
	\[
		\lim_{n\to\infty}\lambda_G(A_n)=
		\lim_{n\to\infty}\lambda_G(C_n)=
		\lim_{n\to\infty}\lambda_G(D_n)=0.
	\]
	Since
	\[
		\lambda_G(\varphi_n^{-1}(tW))=\lambda_G(B_n)+
		\underbrace{\lambda_G(A_n)+\lambda_G(C_n)+\lambda_G(D_n)}_{\to 0},
	\]
	we conclude that
	\[
		\lambda_G(\varphi_n^{-1}(tW))=c_n^{-1}\lambda_G(tW)=c_n^{-1}\lambda_G(W)\to c^{-1}\lambda_G(W),
	\]
	and we thus obtain~\eqref{eq2200}.
	Our next concern will be~\eqref{eq2300}. To this end, set
	\[
		\begin{split}
			A_n & =E_n\cap\varphi_n^{-1}(sW),                                                                               \\
			B_n & =E_n\cap\varphi_n^{-1}(tW),                                                                               \\
			C_n & =E_n\cap\varphi_n^{-1}(stV),                                                                              \\
			D_n & =E_n\cap\bigr[G\setminus\bigl(\varphi_n^{-1}(sU)\cup\varphi_n^{-1}(tU)\cup\varphi_n^{-1}(stV)\bigr)\bigr]
		\end{split}
	\]
	for each $n\in\mathbb{N}$, and note that
	\[
		E_n=A_n\cup B_n\cup C_n\cup D_n
	\]
	and that
	\[
		E_n\subseteq
		\varphi^{-1}(tU)\varphi^{-1}(sU)=\varphi^{-1}(sUtU)\subseteq\varphi^{-1}(stV).
	\]
	From~\eqref{eq1750} we conclude that
	\begin{align*}
		\mathbf{1}_{E_n}(g\circ\varphi-g\circ\varphi_n) & =
		3\mathbf{1}_{E_n}-\bigl(\mathbf{1}_{A_n}+2\mathbf{1}_{B_n}+3\mathbf{1}_{C_n}\bigr)                                                    \\
		                                                & = 3\bigl(\mathbf{1}_{A_n}+\mathbf{1}_{B_n}+\mathbf{1}_{C_n}+\mathbf{1}_{D_n}\bigr)-
		\bigl(\mathbf{1}_{A_n}+2\mathbf{1}_{B_n}+3\mathbf{1}_{C_n}\bigr)                                                                      \\
		                                                & = 2\mathbf{1}_{A_n}+\mathbf{1}_{B_n}+3\mathbf{1}_{D_n},
	\end{align*}
	hence that
	\[
		\bigl\Vert\mathbf{1}_{E_n}(g\circ\varphi-g\circ\varphi_n)\bigr\Vert_1=
		2\lambda_G(A_n)+\lambda_G(B_n)+3\lambda_G(D_n),
	\]
	and finally that
	\[
		\lim_{n\to\infty}\bigl(2\lambda_G(A_n)+\lambda_G(B_n)+3\lambda_G(D_n)\bigr)=0,
	\]
	because
	\[
		\bigl\Vert\mathbf{1}_{E_n}(g\circ\varphi-g\circ\varphi_n)\bigr\Vert_1\le
		\bigl\Vert g\circ\varphi-g\circ\varphi_n\bigr\Vert_1\to 0.
	\]
	We thus obtain
	\[
		\lim_{n\to\infty}\lambda_G(A_n)=
		\lim_{n\to\infty}\lambda_G(B_n)=
		\lim_{n\to\infty}\lambda_G(D_n)=0.
	\]
	On the other hand, we have
	\begin{equation*}
		\lambda_G(E_n)=\lambda_G(C_n)+\underbrace{\lambda_G(A_n)+\lambda_G(B_n)+\lambda_G(D_n)}_{\to 0},
	\end{equation*}
	which implies
	\begin{equation}\label{eq946}
		\liminf_{n\to\infty}\lambda_G(E_n)=
		\liminf_{n\to\infty}\lambda_G(C_n).
	\end{equation}
	On account of~\eqref{eq2100} and~\eqref{eq2200},
	the sets $\varphi_n^{-1}(sW)\cap\varphi^{-1}(sU)$ and $\varphi_n^{-1}(tW)\cap\varphi^{-1}(tU)$ are
	non-empty for sufficiently large $n\in\mathbb{N}$, so that by choosing $x_n\in\varphi_n^{-1}(tW)\cap\varphi^{-1}(tU)$
	we get
	$x_n\bigl(\varphi_n^{-1}(sW)\cap\varphi^{-1}(sU)\bigr)\subset E_n$
	and hence
	\[
		\lambda_G\bigl(\varphi_n^{-1}(sW)\cap\varphi^{-1}(sU)\bigr)
		=
		\lambda_G\bigl(x_n\bigl(\varphi_n^{-1}(sW)\cap\varphi^{-1}(sU)\bigr)\bigr)
		\le
		\lambda_G(E_n).
	\]
	Using~\eqref{eq2100}, this leads to
	\[
		c^{-1}\lambda_G(W)=\lim_{n\to\infty}\lambda_G\bigl(\varphi_n^{-1}(sW)\cap\varphi^{-1}(sU)\bigr)\le
		\liminf_{n\to\infty}\lambda_G(E_n),
	\]
	which (combined with~\eqref{eq946}) gives
	\[
		c^{-1}\lambda_G(W)\le\liminf_{n\to\infty}\lambda_G(C_n),
	\]
	and this is precisely the assertion~\eqref{eq2300}.
	The crucial information given by~\eqref{eq2300} about the set
	\[
		\Bigl[\bigl(\varphi_n^{-1}(tW)\cap\varphi^{-1}(tU)\bigr)\bigl(\varphi_n^{-1}(sW)\cap\varphi^{-1}(sU)\bigr)\Bigr]
		\cap\varphi_n^{-1}(stV)
	\]
	is that it is non-empty for sufficiently large $n\in\mathbb{N}$. Fix such a $n\in\mathbb{N}$
	and then take $x_W,y_W\in W$ such that
	\begin{equation*}
		\begin{split}
			\varphi_n^{-1}(sx_W)                     & \in\varphi^{-1}(sU),    \\
			\varphi_n^{-1}(ty_W)                     & \in\varphi^{-1}(tU),    \\
			\varphi_n^{-1}(ty_W)\varphi_n^{-1}(sx_W) & \in\varphi_n^{-1}(stV).
		\end{split}
	\end{equation*}
	Now the last equation gives
	\begin{equation}\label{eq1033}
		ty_Wsx_W\in stV.
	\end{equation}

	We are now in a position to proceed with the final step of the proof.
	Let $\mathcal{W}$ be the family of neighbourhoods $W$ of $e$ such that $W\subseteq U$.
	Then $\mathcal{W}$ is a directed set with the order defined by $W_1\le W_2$
	if $W_2\subseteq W_1$.
	For each $W\in\mathcal{W}$ we choose $x_W,y_W\in W$ satisfying condition~\eqref{eq1033}.
	We now observe that
	\[
		(x_W)_{W\in\mathcal{W}}\to e, \ (y_W)_{W\in\mathcal{W}}\to e,
	\]
	so that
	\[
		(ty_Wsx_W)_{W\in\mathcal{W}}\to ts.
	\]
	Since the set $stV$ is compact, from~\eqref{eq1033} it may be concluded that
	$
		ts\in  stV,
	$
	which contradicts~\eqref{eq13}.
\end{proof}

\begin{theorem}\label{mainTh4}
	Let $G$ be a unimodular locally compact group, and
	let $\Phi\colon L^1(G)\to L^1(G)$ be a surjective local isometric automorphism.
	Then $\Phi$ is an isometric automorphism.
\end{theorem}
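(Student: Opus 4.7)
The plan is to deduce Theorem~\ref{mainTh4} directly from Lemma~\ref{mainlemma}. Two things must be checked about the hypothesized map $\Phi$: first, that it is a Jordan homomorphism, and second, that it is an approximately local isometric automorphism (and not merely a local one).

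The second point is essentially a triviality: whenever $\Phi(f)=\Phi_f(f)$ for some isometric automorphism $\Phi_f$, the constant sequence $\Phi_{f,n}:=\Phi_f$ witnesses $\Phi(f)$ as the limit $\lim_n \Phi_{f,n}(f)$, so $\Phi$ is automatically an approximately local isometric automorphism in the sense of the definition preceding Lemma~\ref{mainlemma}.

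The first point is the substantive step. Every isometric automorphism of $L^1(G)$ is in particular an algebra automorphism, so a local isometric automorphism is a local automorphism, and in particular a local homomorphism. Thus we can appeal to the result of~\cite{ABEGV}, noted in the introduction in the form ``Local homomorphisms also lead to Jordan homomorphisms'', to conclude that $\Phi$ is a Jordan homomorphism. This is where the main technical content of the argument sits; I do not see a short elementary bypass, because the naive computation $\Phi(f^2)=\Phi_{f^2}(f)^2$ vs.\ $\Phi(f)^2=\Phi_f(f)^2$ would require $\Phi_f$ and $\Phi_{f^2}$ to agree on $f^2$, which is not visible from the bare local hypothesis.

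Once these two observations are in hand, $\Phi$ is a surjective Jordan homomorphism which is an approximately local isometric automorphism, and Lemma~\ref{mainlemma} immediately furnishes the conclusion that $\Phi$ is an isometric automorphism. So the expected obstacle is not in the present argument at all, but is packaged into the reference to~\cite{ABEGV} and into the case analysis inside Lemma~\ref{mainlemma} (where unimodularity is used to rule out the anti-automorphism alternative coming from Theorem~\ref{mainTheo}).
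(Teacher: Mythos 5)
Your reduction to Lemma~\ref{mainlemma} is fine, and the observation that a local isometric automorphism is trivially an approximately local one (take the constant sequence) is exactly how the lemma gets applied. The problem is the other half: you dispose of the substantive step --- that $\Phi$ is a Jordan homomorphism --- by citing \cite{ABEGV} via the introduction's slogan ``local homomorphisms also lead to Jordan homomorphisms.'' That sentence is motivational, not a theorem applicable to arbitrary Banach algebras; the actual results of \cite{ABEGV} carry hypotheses on the algebra (that is a paper about \emph{commutator-simple} algebras), and you neither state the precise result you invoke nor verify its hypotheses for $L^1(G)$ of a general unimodular locally compact group. There is no off-the-shelf statement ``every surjective local (isometric) automorphism of $L^1(G)$ is a Jordan homomorphism'': if there were, the authors would not have needed unimodularity here, nor the separate arguments and different hypotheses (discreteness, $[\mathrm{MAP}]$) in Theorems~\ref{mainTh3} and~\ref{mainTh2} for the approximately local case. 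So the step on which, as you say yourself, ``the main technical content sits'' is simply not proved in your plan.

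What the paper actually does at this point is a bespoke argument only \emph{inspired} by \cite{ABEGV}. Since $\Phi$ is an isometry, it suffices to check the Jordan property on the dense subalgebra $C_{00}(G)$. For $f\in C_{00}(G)$ one writes the witnessing isometric automorphism in Wendel's form $\Phi_f g=c_f\chi_f(g\circ\varphi_f)$; this shows $\Phi(C_{00}(G))=C_{00}(G)$ and gives the exact identities $\mathcal{E}(\Phi f)=c_f\mathcal{E}(f)$, $\mathcal{E}(\Phi f\ast\Phi f)=c_f\mathcal{E}(f\ast f)$, $\mathcal{E}(\Phi f\ast\Phi f\ast\Phi f)=c_f\mathcal{E}(f\ast f\ast f)$, where $\mathcal{E}(f)=f(e)$. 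Unimodularity enters precisely here (not only in Lemma~\ref{mainlemma}, as you suggest): it makes $\mathcal{E}$ tracial, $\mathcal{E}(f\ast g)=\mathcal{E}(g\ast f)$. One then shows all the constants $c_f$ with $\mathcal{E}(f)\neq 0$ coincide with a single $c\neq 0$, linearizes the displayed identities, and kills $F=\Phi(f\ast f)-\Phi f\ast\Phi f$ by testing against $h(t)=\overline{F(t^{-1})}$, which turns $\mathcal{E}(F\ast h)=0$ into $\int_G\vert F\vert^2\,d\lambda_G=0$. Note also that this argument uses the \emph{exact} local hypothesis (a single Wendel triple per $f$), which is why Theorem~\ref{mainTh4} is stated for local rather than approximately local maps; your plan does not engage with any of this, so as it stands it has a genuine gap at its central step.
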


\begin{proof}
	On account of Lemma~\ref{mainlemma}, we only need to prove that $\Phi$ is a Jordan homomorphism.

	Let $C_{00}(G)$ be the subalgebra of $L^1(G)$ consisting of continuous functions on $G$ with compact support.
	Since $C_{00}(G)$ is dense in $L^1(G)$ and $\Phi$ is continuous (actually $\Phi$ is an isometry), it suffices
	to prove that $\Phi$ is a Jordan homomorphism on $C_{00}(G)$.
	Our next arguments for this task are inspired by ideas from~\cite{ABEGV}.

	Let us consider the evaluation functional $\mathcal{E}\colon C_{00}(G)\to\mathbb{C}$ defined by
	\[
		\mathcal{E}(f)=f(e)\quad\forall f\in C_{00}(G).
	\]
	We observe that $\mathcal{E}$ has the tracial property
	\begin{equation}\label{e747}
		\mathcal{E}(f\ast g)=\mathcal{E}(g\ast f)\quad\forall f,g\in C_{00}(G).
	\end{equation}
	Indeed, using the unimodularity of $G$, we see that
	\[
		\begin{split}
			\mathcal{E}(f\ast g) & = \int_{G}f(t)g(t^{-1}e)\,d\lambda_G(t) = \int_{G}f(t)g(t^{-1})\,d\lambda_G(t)                        \\
			                     & = \int_{G}f(t^{-1})g(t)\,d\lambda_G(t) = \int_{G}g(t)f(t^{-1}e)\,d\lambda_G(t) = \mathcal{E}(g\ast f)
		\end{split}
	\]
	for all $f,g\in C_{00}(G)$.

	We now proceed to show that $\Phi$ maps $C_{00}(G)$ onto itself and that
	there exists a constant $c\ne 0$ such that
	\begin{equation}\label{e748}
		\mathcal{E}\bigl(\Phi f\bigr)=c\mathcal{E}(f)\quad\forall f\in C_{00}(G).
	\end{equation}
	Let $f\in C_{00}(G)$, and take an isometric automorphism $\Phi_f$ of $L^1(G)$ such that
	$
		\Phi f=\Phi_f f
	$.
	We know that $\Phi_f$ is expressible as
	\[
		\Phi _f g(t)=c_f\chi_f(t) g(\varphi_f(t))
	\]
	for each $g\in L^1(G)$ and almost all $t\in H$, where
	\begin{itemize}
		\item
		      $\varphi_f\colon G\to G$ is a homeomorphic group isomorphism,
		\item
		      $\chi_f\colon G\to\mathbb{T}$ is a continuous group homomorphism, and
		\item
		      $c_f$ is the constant value of the ratio $\lambda_G(\varphi_f(E))/\lambda_G(E)$
		      for each measurable set $E\subseteq G$ with finite non-zero measure.
	\end{itemize}
	We will continue to use the above notation throughout this proof.
	Consequently, we have
	\[
		\Phi f=c_f\chi_f (f\circ\varphi_f)\in C_{00}(G)
	\]
	and
	\begin{equation}\label{e829}
		\mathcal{E}\bigl(\Phi f\bigr)=
		c_f\underbrace{\chi_f(e)}_{=1}f(\underbrace{\varphi_f(e)}_{=e})=c_f\mathcal{E}(f).
	\end{equation}
	On account of~\eqref{e829},
	the linear functional $f\mapsto \mathcal{E}(\Phi f)$ on $C_{00}(G)$ has the property
	\[
		f\in  C_{00}(G), \ \mathcal{E}(\Phi f)=0 \ \iff \ \mathcal{E}(f)=0
	\]
	and this implies that there exists a constant $c\ne 0$ such that~\eqref{e748} holds.
	It should be pointed out that
	\begin{equation}\label{930}
		f\in C_{00}(G), \ \mathcal{E}(f)\ne 0  \implies  c_f=c.
	\end{equation}
	We now check that $\Phi(C_{00}(G))=C_{00}(G)$.
	Set $h\in C_{00}(G)$, and let $f=\Phi^{-1}h$.
	Then $h=c_f\chi_f(f\circ\varphi_f)$, and hence
	\[
		f=c_{f}^{-1}\frac{1}{\chi_f\circ\varphi_f^{-1}}\, h\circ\varphi_{f}^{-1}\in C_{00}(G).
	\]

	We next claim that
	\begin{align}
		\mathcal{E}(\Phi f\ast\Phi f)           & =c\mathcal{E}(f\ast f), \label{850}      \\
		\mathcal{E}(\Phi f\ast\Phi f\ast\Phi f) & =c\mathcal{E}(f\ast f\ast f) \label{851}
	\end{align}
	for each $f\in C_{00}(G)$.
	Given $f\in C_{00}(G)$, we have
	\[
		\begin{split}
			\Phi f\ast\Phi f           & =\Phi_f f\ast\Phi_f f=\Phi_f(f\ast f),                   \\
			\Phi f\ast\Phi f\ast\Phi f & =\Phi_f f\ast\Phi_f f\ast\Phi_f f=\Phi_f(f\ast f\ast f),
		\end{split}
	\]
	and therefore
	\[
		\begin{split}
			\mathcal{E}\bigl(\Phi f\ast\Phi f\bigr)
			 & =\mathcal{E}\bigl(\Phi_f(f\ast f)\bigr)=c_f\mathcal{E}(f\ast f),              \\
			\mathcal{E}\bigl(\Phi f\ast\Phi f\ast\Phi f\bigr)
			 & = \mathcal{E}\bigl(\Phi_f(f\ast f\ast f)\bigr)=c_f\mathcal{E}(f\ast f\ast f).
		\end{split}
	\]
	If $f\in C_{00}(G)$ is such that $\mathcal{E}(f)\ne 0$, then, using~\eqref{930}, we obtain both~\eqref{850} and~\eqref{851}.
	If $f\in C_{00}(G)$ is such that $\mathcal{E}(f)=0$, then we fix $g\in C_{00}(G)$ with $\mathcal{E}(g)\ne 0$, using
	what has already been proved with $f$ replaced by $f+\alpha g$ ($\alpha\in\mathbb{C}$), and letting $\alpha\to 0$
	we also arrive at~\eqref{850} and~\eqref{851}.
	Now,
	for all $f,g\in C_{00}(G)$ and $\alpha\in\mathbb{C}$, we replace $f$ by $f+\alpha g$ in~\eqref{850} and
	\eqref{851} and then we identify the coefficients of $\alpha$ in both sides of the corresponding equations
	to obtain
	\begin{equation*}
		\begin{split}
			\mathcal{E}(\Phi f\ast\Phi g+\Phi g\ast\Phi f) & =
			c\mathcal{E}(f\ast g+g\ast f)
		\end{split}
	\end{equation*}
	and
	\begin{gather*}
		\mathcal{E}\left(\Phi f\ast\Phi f\ast\Phi g+\Phi f\ast\Phi g\ast\Phi f + \Phi g \ast \Phi f \ast \Phi f\right)\\
		=c\mathcal{E}\left(f\ast f\ast g+f\ast g\ast f+g\ast f\ast f\right).
	\end{gather*}
	From~\eqref{e747} we conclude that
	\begin{equation}\label{959}
		\begin{split}
			\mathcal{E} \left(\Phi f\ast\Phi g \right)          & =
			c\mathcal{E} \left(f\ast g\right),                      \\
			\mathcal{E} \left(\Phi f\ast\Phi f\ast\Phi g\right) & =
			c\mathcal{E}\left(f\ast f\ast g \right).
		\end{split}
	\end{equation}

	We can now address the question of proving that $\Phi$ is a Jordan homomorphism on $C_{00}(G)$.
	Set $f\in C_{00}(G)$, and let $F=\Phi(f\ast f)-\Phi f\ast\Phi f$.
	For each $g\in L^1(G)$, we use~\eqref{959} to see that
	\[
		\mathcal{E} \bigl(\Phi(f\ast f)\ast\Phi g\bigr)=
		c\mathcal{E}\left((f\ast f)\ast g\right)=\mathcal{E}\bigl(\Phi f\ast\Phi f\ast\Phi g\bigr),
	\]
	so that
	$
		\mathcal{E}\bigl(F\ast\Phi g\bigr)=0.
	$
	Since $\Phi(C_{00}(G))=C_{00}(G)$, it follows that
	$
		\mathcal{E}\bigl(F\ast h\bigr)=0
	$
	for each $h\in C_{00}(G)$. By defining $h\in C_{00}(G)$ by
	\[
		h(t)=\overline{F(t^{-1})}\quad\forall t\in G
	\]
	we arrive at
	\[
		\begin{split}
			0 & =\mathcal{E} \left(F\ast h\right) = \int_G F(t)h(t^{-1}e)\,d\lambda_G(t)            \\
			  & = \int_G F(t)\overline{F(t)}\,d\lambda_G(t)=\int_G\vert F(t)\vert^2\,d\lambda_G(t),
		\end{split}
	\]
	which gives $F=0$, and hence $\Phi(f\ast f)=\Phi f\ast\Phi f$, as required.
\end{proof}

If $G$ is a discrete group, then the Haar measure is the counting measure, and
the corresponding group algebra will be written as $\ell^1(G)$.

\begin{theorem}\label{mainTh3}
	Let $G$ be a discrete group, and
	let $\Phi\colon \ell^1(G)\to\ell^1(G)$ be a surjective approximately local isometric automorphism.
	Then $\Phi$ is an isometric automorphism.
\end{theorem}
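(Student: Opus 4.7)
The plan is to reduce to Lemma~\ref{mainlemma}. Since $G$ is discrete it is unimodular, so it suffices to show that $\Phi$ is a Jordan homomorphism; Lemma~\ref{mainlemma} then upgrades the conclusion to ``isometric automorphism''. I would follow the same template as in the proof of Theorem~\ref{mainTh4}, but exploiting that on a discrete group the evaluation at the identity is a bounded linear functional on all of $\ell^1(G)$, so the restriction to $C_{00}$ disappears.

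Concretely, let $\mathcal{E}\colon\ell^1(G)\to\mathbb{C}$, $\mathcal{E}(f)=f(e)$. This is a contractive linear functional, and unimodularity gives the trace property $\mathcal{E}(f\ast g)=\mathcal{E}(g\ast f)$. The crucial observation is that every isometric automorphism $\Psi$ of $\ell^1(G)$ preserves $\mathcal{E}$: by Theorem~\ref{mainTheo}(i), $\Psi g(t)=c\chi(t)g(\varphi(t))$ with $\varphi$ a group automorphism and $\chi$ a character, so $\varphi(e)=e$ and $\chi(e)=1$; moreover $c=1$ since the Haar measure is the counting measure and $\varphi$ is a bijection. Hence $\mathcal{E}\circ\Psi=\mathcal{E}$.

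Now fix $f\in\ell^1(G)$ and pick a sequence $(\Phi_{f,n})$ of isometric automorphisms with $\Phi_{f,n}(f)\to\Phi f$ in norm. The sequence is bounded (in fact consists of isometries), so joint continuity of convolution gives $\Phi_{f,n}(f)^{\ast k}\to(\Phi f)^{\ast k}$ in norm for $k=2,3$. Since each $\Phi_{f,n}$ is a homomorphism, $\Phi_{f,n}(f)^{\ast k}=\Phi_{f,n}(f^{\ast k})$, and applying $\mathcal{E}$ together with $\mathcal{E}\circ\Phi_{f,n}=\mathcal{E}$ and continuity of $\mathcal{E}$ yields
\[
\mathcal{E}\bigl((\Phi f)^{\ast 2}\bigr)=\mathcal{E}(f^{\ast 2}),\qquad \mathcal{E}\bigl((\Phi f)^{\ast 3}\bigr)=\mathcal{E}(f^{\ast 3})\quad\forall f\in\ell^1(G).
\]
Polarizing $f\leadsto f+\alpha g$ and reading off the coefficient of $\alpha$ in each identity, and then using the tracial property of $\mathcal{E}$, I would obtain
\[
\mathcal{E}\bigl(\Phi f\ast\Phi g\bigr)=\mathcal{E}(f\ast g),\qquad \mathcal{E}\bigl(\Phi f\ast\Phi f\ast\Phi g\bigr)=\mathcal{E}(f\ast f\ast g)
\]
for all $f,g\in\ell^1(G)$.

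To conclude, set $F=\Phi(f\ast f)-\Phi f\ast\Phi f$. Applying the two displayed polarized identities (the first to the pair $(f\ast f,g)$ and the second to $(f,g)$) gives $\mathcal{E}(F\ast\Phi g)=0$ for every $g\in\ell^1(G)$; by surjectivity of $\Phi$, $\mathcal{E}(F\ast h)=0$ for every $h\in\ell^1(G)$. Choosing $h(t)=\overline{F(t^{-1})}$ (which lies in $\ell^1(G)$ because $F$ does and $G$ is discrete) gives $\sum_{t\in G}\lvert F(t)\rvert^{2}=0$, hence $F=0$. Thus $\Phi$ is a Jordan homomorphism, and Lemma~\ref{mainlemma} finishes the proof. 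The only genuinely delicate point is the bookkeeping around the approximating sequence: $(\Phi_{f,n})$ depends on $f$, so I cannot simultaneously approximate $\Phi f$ and $\Phi g$ by the same automorphisms; the polarization argument is precisely what circumvents this, since it uses the $\mathcal{E}$-identity one $f$ at a time.
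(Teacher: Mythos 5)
Your proposal is correct and follows essentially the same route as the paper: the evaluation functional $\mathcal{E}(f)=f(e)$ with its tracial property, the observation that every isometric automorphism of $\ell^1(G)$ fixes $\mathcal{E}$ (since $c_n=1$ for the counting measure), the limit along the approximating sequence to get the quadratic and cubic $\mathcal{E}$-identities, polarization to handle the $f$-dependence of the approximating automorphisms, and the final $F=\Phi(f\ast f)-\Phi f\ast\Phi f$ argument via $h(t)=\overline{F(t^{-1})}$, before invoking Lemma~\ref{mainlemma}. No gaps.
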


\begin{proof}
	By Lemma~\ref{mainlemma}, it suffices to show that $\Phi$ is a Jordan homomorphism.
	For this purpose we will use a method similar to that used in Theorem~\ref{mainTh4}.

	Let us define a linear functional $\mathcal{E}\colon\ell^1(G)\to\mathbb{C}$ by
	\[
		\mathcal{E}(f)=f(e)\quad\forall f \in \ell^1(G).
	\]
	We observe that
	\[
		\left\vert \mathcal{E}(f)\right\vert \le \left\Vert f\right\Vert_1 \quad\forall f\in\ell^1(G),
	\]
	so that it is continuous. Further, $\mathcal{E}$ has the tracial property
	\begin{equation}\label{e1858}
		\mathcal{E} \left(f\ast g \right) = \mathcal{E} \left(g\ast f\right)\quad\forall f,g\in\ell^1(G).
	\end{equation}
	Indeed,
	\begin{equation*}
		\begin{split}
			\mathcal{E} \left(f\ast g\right) & =
			\sum_{s\in G} f(s)g(s^{-1}e) = \sum_{s\in G}f(s)g(s^{-1})                                                                         \\
			                                 & = \sum_{s\in G}f(s^{-1})g(s) = \sum_{s\in G}g(s)f(s^{-1}e) = \mathcal{E}\left(g\ast f\right) .
		\end{split}
	\end{equation*}

	Our next concern is to prove that
	\begin{align}
		\mathcal{E}(\Phi f)                     & =\mathcal{E}(f), \label{e1828}            \\
		\mathcal{E}(\Phi f\ast\Phi f)           & =\mathcal{E}(f\ast f), \label{e1829}      \\
		\mathcal{E}(\Phi f\ast\Phi f\ast\Phi f) & =\mathcal{E}(f\ast f\ast f) \label{e1830}
	\end{align}
	for each $f\in\ell^1(G)$.
	Set $f\in \ell^1(G)$. By hypothesis, there exists a sequence $(\Phi_n)$ of
	isometric automorphisms of $\ell^1(G)$ such that
	\begin{align*}
		\Phi f                     & =\lim_{n\to\infty}\Phi_n f,              \\
		\shortintertext{and hence}
		\Phi f\ast\Phi f           & =\lim_{n\to\infty}\Phi_n(f\ast f),       \\
		\Phi f\ast\Phi f\ast\Phi f & =\lim_{n\to\infty}\Phi_n(f\ast f\ast f).
	\end{align*}
	For each $n\in\mathbb{N}$, $\Phi_n$ is expressible as
	\[
		\Phi_n g=\chi_n (g\circ\varphi_n)
		\quad
		\forall g\in\ell^1(G),
	\]
	where
	\begin{itemize}
		\item
		      $\varphi_n\colon G\to G$ is a group automorphism,
		\item
		      $\chi_n\colon G\to\mathbb{T}$ is a group homomorphism.
	\end{itemize}
	It should be pointed out that the ``Jacobian'' constant $c_n$ has been omitted in the above representation because the Haar measure on $G$ is the counting measure, so that
	$c_n=\lambda_G(\varphi_n(\{e\}))/\lambda_G(\{e\})=\lambda_G(\{e\})/\lambda_G(\{e\})=1$.
	Therefore
	\[
		\mathcal{E}(\Phi_n g)=\underbrace{\chi_n(e)}_{=1} g(\underbrace{\varphi_n(e)}_{=e})=\mathcal{E}(g)\quad\forall g\in\ell^1(G).
	\]
	We thus get
	\begin{equation*}
		\begin{split}
			\mathcal{E}(\Phi f)
			 & =\lim_{n\to\infty} \mathcal{E}(\Phi_n f)=\mathcal{E}(f),                                   \\
			\mathcal{E}(\Phi f\ast\Phi f)
			 & =\lim_{n\to\infty}\mathcal{E}\bigl(\Phi_n(f\ast f)\bigr)=\mathcal{E}(f\ast f),             \\
			\mathcal{E}(\Phi f\ast\Phi f\ast\Phi f)
			 & =\lim_{n\to\infty}\mathcal{E}\bigl(\Phi_n(f\ast f\ast f)\bigr)=\mathcal{E}(f\ast f\ast f).
		\end{split}
	\end{equation*}
	as claimed.
	For all $f,g\in \ell^1(G)$ and $\alpha\in\mathbb{C}$, we replace $f$ by $f+\alpha g$ in~\eqref{e1829}
	and~\eqref{e1830} and then we identify the coefficients of $\alpha$ in both sides of the corresponding
	equations to get
	\begin{equation*}
		\begin{split}
			\mathcal{E} \left(\Phi f\ast\Phi g+\Phi g\ast\Phi f \right) & =
			\mathcal{E} \left(f\ast g+g\ast f\right)
		\end{split}
	\end{equation*}
	and
	\begin{gather*}
		\mathcal{E} \left(\Phi f\ast\Phi f\ast\Phi g+\Phi f\ast\Phi g\ast\Phi f + \Phi g \ast \Phi f \ast \Phi f\right) \\
		=\mathcal{E}\left(f\ast f\ast g+f\ast g\ast f+g\ast f\ast f\right).
	\end{gather*}
	Using~\eqref{e1858} we arrive at
	\begin{equation}\label{e1904}
		\begin{split}
			\mathcal{E}(\Phi f\ast\Phi g)           & =
			\mathcal{E}(f\ast g),                       \\
			\mathcal{E}(\Phi f\ast\Phi f\ast\Phi g) & =
			\mathcal{E}(f\ast f\ast g).
		\end{split}
	\end{equation}

	We proceed to show that $\Phi$ is a Jordan homomorphism.
	Set $f\in\ell^1(G)$, and let $F=\Phi(f\ast f)-\Phi f\ast\Phi f$.
	Our goal is to show that $F=0$.
	For each $g\in\ell^1(G)$, we use~\eqref{e1904} to obtain
	\[
		\mathcal{E}\bigl(\Phi(f\ast f)\ast\Phi g\bigr) =
		\mathcal{E}((f\ast f)\ast g)=\mathcal{E}\bigl(\Phi f\ast\Phi f\ast\Phi g\bigr),
	\]
	so that
	\[
		\mathcal{E}\bigl(F\ast\Phi g\bigr)=0.
	\]
	Since $\Phi$ is surjective, it follows that $\mathcal{E}\bigl(F\ast h\bigr)=0$
	for each $h\in\ell^1(G)$. By defining $h\in\ell^1(G)$ by
	\[
		h(t)=\overline{F(t^{-1})}\quad\forall t\in G
	\]
	we arrive at
	\[
		0=\mathcal{E}(F\ast h)=
		\sum_{t\in G} F(t)h(t^{-1}e)=
		\sum_{t\in G}F(t)\overline{F(t)}=\sum_{t\in G}\left\vert F(t)\right\vert^2,
	\]
	which gives $F=0$, as required.
\end{proof}

\begin{theorem}\label{mainTh2}
	Let $G$ a locally compact group with $G\in {\rm [MAP]}$, and
	let $\Phi\colon L^1(G)\to L^1(G)$ be a surjective approximately local isometric automorphism.
	Then $\Phi$ is an isometric automorphism.
\end{theorem}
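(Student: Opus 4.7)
Plan: Since locally compact groups in $[\mathrm{MAP}]$ are unimodular---a continuous homomorphism $G\to\mathbb{R}_{>0}$ must factor through the Bohr compactification $bG$, a compact group, and the only continuous homomorphism from a compact group to $\mathbb{R}_{>0}$ is trivial---Lemma~\ref{mainlemma} reduces the task to showing that $\Phi$ is a Jordan homomorphism. I would adapt the strategy of Theorem~\ref{mainTh3}, replacing the evaluation functional $\mathcal{E}(f)=f(e)$ of the discrete case by the family of continuous tracial functionals $\mathcal{E}_\pi(f)=\mathrm{tr}(\tilde\pi(f))$ indexed by the continuous finite-dimensional unitary representations $\pi\colon G\to U(n_\pi)$ of $G$, where $\tilde\pi\colon L^1(G)\to M_{n_\pi}(\mathbb{C})$ is the integrated representation $\tilde\pi(f)=\int_G f(t)\pi(t)\,d\lambda_G(t)$. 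Each $\mathcal{E}_\pi$ is continuous and tracial since its defining character $\mathrm{tr}\circ\pi$ is central on $G$, and the MAP hypothesis guarantees that the joint map $f\mapsto(\tilde\pi(f))_\pi$ is injective on $L^1(G)$.

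A change of variables on the Wendel form $\Phi_0 f=c\chi(f\circ\varphi)$ of an isometric automorphism yields $\tilde\pi\circ\Phi_0=\widetilde{\pi^{\Phi_0}}$ with $\pi^{\Phi_0}(t)=\chi(\varphi^{-1}(t))\pi(\varphi^{-1}(t))$, so the family $\{\mathcal{E}_\pi\}_\pi$ is permuted rather than preserved by isometric automorphisms. Given $f\in L^1(G)$ and the approximating sequence $\Phi_{f,n}$ with $\Phi f=\lim_n\Phi_{f,n}f$ in $L^1$-norm, joint continuity of convolution gives $\Phi_{f,n}(f^{\ast k})=(\Phi_{f,n}f)^{\ast k}\to(\Phi f)^{\ast k}$ in $L^1$-norm for $k=2,3$. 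Applying $\tilde\pi$, using both the cyclicity of the matrix trace and the relation $\tilde\pi\circ\Phi_{f,n}=\widetilde{\pi^{\Phi_{f,n}}}$, and polarizing in $f,g$ via $f\mapsto f+\alpha g$ in the style of~\eqref{e1904}, I would derive the identities $\mathcal{E}_\pi(\Phi f\ast\Phi g)=\mathcal{E}_\pi(\Phi(f\ast g))$ and $\mathcal{E}_\pi(\Phi f\ast\Phi f\ast\Phi g)=\mathcal{E}_\pi(\Phi(f\ast f)\ast g)$ for every $g\in L^1(G)$ and every $\pi$. Setting $F=\Phi(f\ast f)-\Phi f\ast\Phi f$, these yield $\mathrm{tr}(\tilde\pi(F)\tilde\pi(\Phi g))=0$ for all $g,\pi$, hence $\mathrm{tr}(\tilde\pi(F)\tilde\pi(h))=0$ for every $h\in L^1(G)$ and every $\pi$ by surjectivity of $\Phi$; varying $h$ and using the density of $\tilde\pi(L^1(G))$ in $M_{n_\pi}(\mathbb{C})$ (for irreducible $\pi$, via Burnside's theorem) forces $\tilde\pi(F)=0$ for every $\pi$, and MAP-injectivity concludes $F=0$.

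The main obstacle is the faithful passage from the $n$-indexed identities to the limiting identities for $\Phi$: since $\mathcal{E}_\pi$ is not preserved by an individual isometric automorphism, the scalar-valued polarization of the discrete proof does not transfer verbatim. The resolution is to work with matrix-valued rather than scalar-valued limits in $M_{n_\pi}(\mathbb{C})$, to use $\tilde\pi\circ\Phi_0=\widetilde{\pi^{\Phi_0}}$ to absorb the $\Phi_{f,n}$-dependence into a representation dependence, and to lean on the $L^1$-convergence $(\Phi_{f,n}f)^{\ast k}\to(\Phi f)^{\ast k}$ to close the limiting step; the MAP separation property then plays the role of the argument ``take $h(t)=\overline{F(t^{-1})}$'' that concluded Theorem~\ref{mainTh3}, turning joint vanishing on the representations into the vanishing of $F$. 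Once the Jordan property is secured, Lemma~\ref{mainlemma} yields the conclusion.
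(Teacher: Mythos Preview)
There is a genuine gap in the polarization step. In the discrete case the single functional $\mathcal{E}$ is \emph{invariant} under every isometric automorphism (the Jacobian constant is $1$), so $\mathcal{E}\bigl((\Phi h)^{\ast k}\bigr)=\lim_n\mathcal{E}\bigl(\Phi_{h,n}(h^{\ast k})\bigr)=\mathcal{E}(h^{\ast k})$ holds for \emph{every} $h$, and polarizing in $h$ is legitimate. In your setting each $\mathcal{E}_\pi$ is only \emph{permuted}: $\mathcal{E}_\pi\circ\Phi_{h,n}=\mathcal{E}_{\pi^{\Phi_{h,n}}}$, whence
\[
\mathcal{E}_\pi\bigl((\Phi h)^{\ast k}\bigr)=\lim_n\mathcal{E}_{\pi^{\Phi_{h,n}}}(h^{\ast k}),
\]
and the right side depends on the approximating sequence, which depends on $h$. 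You therefore never obtain an identity $\mathcal{E}_\pi\bigl((\Phi h)^{\ast 2}\bigr)=\mathcal{E}_\pi\bigl(\Phi(h^{\ast 2})\bigr)$ valid for all $h$: the left side is computed via $(\Phi_{h,n})$ and the right via $(\Phi_{h\ast h,m})$, two unrelated sequences. Without this, the polarization producing your claimed identity $\mathcal{E}_\pi(\Phi f\ast\Phi g)=\mathcal{E}_\pi(\Phi(f\ast g))$ is unavailable. Passing to matrix values does not help: you know $\widetilde{\pi^{\Phi_{f,n}}}(f)\to\tilde\pi(\Phi f)$, but nothing about $\widetilde{\pi^{\Phi_{f,n}}}(g)$ for $g\neq f$, and there is no reason the representations $\pi^{\Phi_{f,n}}$ should converge pointwise on $G$. (A minor point: your argument that $[\mathrm{MAP}]$ forces unimodularity is not right as written, since the universal property of $bG$ only factors homomorphisms into \emph{compact} targets, and $\mathbb{R}_{>0}$ is not compact.)

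The paper sidesteps this obstacle entirely. It passes to the unitization $\mathcal{A}=\mathbb{C}\delta_e\oplus L^1(G)$, extends $\Phi$ to a unital surjection $\Psi$ on $\mathcal{A}$, and shows that $\Psi$ \emph{preserves invertibility}: if $a\in\mathcal{A}$ is invertible, each extended isometric automorphism $\Psi_{f,n}$ sends $a$ to an invertible element with $\bigl\Vert\Psi_{f,n}(a)^{-1}\bigr\Vert=\bigl\Vert a^{-1}\bigr\Vert$, and a limit of invertibles with uniformly bounded inverses is invertible. Since for $G\in[\mathrm{MAP}]$ the finite-dimensional irreducible representations separate $\mathcal{A}$, Aupetit's Gleason--Kahane--\.Zelazko type theorem then forces $\Psi$ (hence $\Phi$) to be a Jordan homomorphism, after which Lemma~\ref{mainlemma} finishes. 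The key conceptual difference is that invertibility preservation survives approximate locality painlessly, whereas any trace-functional strategy must confront the fact that isometric automorphisms shuffle the functionals rather than fix them.
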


\begin{proof}
	On account of Theorem~\ref{mainTh3}, we are reduced to consider the case where $G$ is non-discrete.

	According to Lemma~\ref{mainlemma}, we only need to show that $\Phi$ is a Jordan homomorphism.
	To this end, let $\mathcal{A}$ be the unital closed subalgebra of $M(G)$ defined by
	\[
		\mathcal{A}=\bigl\{\alpha\delta_e+f : \alpha\in\mathbb{C}, \ f\in L^1(G)\bigr\},
	\]
	and let us define a unital surjective map $\Psi\colon\mathcal{A}\to\mathcal{A}$ by
	\[
		\Psi(\alpha\delta_e+f)=\alpha\delta_e+\Phi(f)\quad
		\forall \alpha\in\mathbb{C}, \ \forall f\in L^1(G).
	\]
	We now check that $\Psi$ preserves invertibility. Let $\alpha\delta_e+f\in\mathcal{A}$ be an invertible element,
	and take a sequence $(\Phi_{f,n})$ of isometric automorphisms of $L^1(G)$ such that $\Phi(f)=\lim\Phi_{f,n}(f)$.
	For each $n\in\mathbb{N}$, we define
	$\Psi_{f,n}\colon\mathcal{A}\to\mathcal{A}$ by
	\[
		\Psi_{f,n}(\beta\delta_e+g)=\beta\delta_e+\Phi_{f,n}(g)\quad\forall \beta\in\mathbb{C}, \ \forall g\in L^1(G).
	\]
	It is easily checked that $\Psi_{f,n}$ is an automorphism and that
	\[
		\Psi(\alpha\delta_e+ f)=\lim_{n\to\infty}\Psi_{f,n}(\alpha\delta_e+f).
	\]
	Further, we now show that $\Psi_{f,n}$ is an isometry.
	By using~\cite[Theorem~19.20(iii)]{HR}, we see that
	\begin{equation*}
		\begin{split}
			\left\Vert \Psi_{f,n}(\beta\delta_e+g) \right\Vert & =
			\left\Vert \beta\delta_e+\Phi_{f,n}(g)\right\Vert =
			\left\Vert \beta\delta_e\right\Vert + \left\Vert\Phi_{f,n}(g)\right\Vert_1                                             \\
			                                                   & = \left\Vert\beta\delta_e\right\Vert + \left\Vert g\right\Vert_1=
			\left\Vert\beta\delta_e+g\right\Vert_1
		\end{split}
	\end{equation*}
	for all $\beta\in\mathbb{C}$ and $g\in L^1(G)$.
	Since $\Psi_{f,n}$ is an automorphism, it follows that $\Psi_{f,n}(\alpha\delta_e+f)$ is invertible
	and further
	\[
		\left\Vert\Psi_{f,n}(\alpha\delta_e+f)^{-1}\right\Vert = \left\Vert\Psi_{f,n}((\alpha\delta_e+f)^{-1}) \right\Vert =
		\left\Vert (\alpha\delta_e+f)^{-1} \right\Vert \quad\forall n\in\mathbb{N}.
	\]
	From~\cite[Theorem~2.3.21(i)]{D} we deduce that $\Psi(\alpha\delta_e+f)$ is invertible.
	Since the collection of finite-dimensional irreducible representations of $\mathcal{A}$
	is a separating family for $\mathcal{A}$ (see~\cite[Theorem~3.2]{GM}),
	\cite[Th\'eor\`eme~2]{Au} shows that $\Psi$ is a Jordan homomorphism and therefore so is $\Phi$.
\end{proof}

\medskip

\noindent
\textbf{Author contributions} All authors contributed equally to the study conception and design. All authors read and approved the final manuscript.

\medskip

\noindent
\textbf{Funding} The authors were supported by
MCIN/AEI/10.13039/501100011033 and ``ERDF A way of making Europe'' grant PID2021-122126NB-C31
and by Junta de Andaluc\'{\i}a grant FQM185.

\medskip

\noindent
\textbf{Data Availability} Data sharing is not applicable to this article as no datasets were generated or analysed during the current study.

\medskip

\noindent
\textbf{Declarations}

\noindent
\textbf{Conflict of interest} The authors have no relevant financial or non-financial interests to disclose.

\end{document}